\def\UrlSpecials{\do\~{\kern -.15em\lower .7ex\hbox{~}\kern .04em}} \catcode`~=13 
\newcommand{\norm}[1]{\left\Vert#1\right\Vert}
\newcommand{\normt}[1]{\Vert#1\Vert}
\newcommand{\abs}[1]{\left\lvert#1\right\rvert}
\newcommand{\nn}{\nonumber}
\newcommand{\defeq}{\triangleq}
\newcommand{\eqcst}{\stackrel{{\rm c}}{=}}
\newcommand{\inter}{\mathbf{int}\,}
\newcommand{\barbH}{\overline{\bf H}}
\newcommand{\barbW}{\overline{\bf W}}
\newcommand{\barbX}{\overline{\bf X}}
\newcommand{\barell}{\overline{\ell}}
\newcommand{\calA}{\mathcal{A}}
\newcommand{\calK}{\mathcal{K}}
\newcommand{\calS}{\mathcal{S}}
\newcommand{\calX}{\mathcal{X}}
\newcommand{\bH}{\mathbf{H}}
\newcommand{\bM}{\mathbf{M}}
\newcommand{\bR}{\mathbf{R}}
\newcommand{\bS}{\mathbf{S}}
\newcommand{\bV}{\mathbf{V}}
\newcommand{\bW}{\mathbf{W}}
\newcommand{\bx}{\mathbf{x}}
\newcommand{\bX}{\mathbf{X}}
\newcommand{\bY}{\mathbf{Y}}
\newcommand{\bZ}{\mathbf{Z}}
\newcommand{\rmF}{\mathrm{F}}
\newcommand{\bbN}{\mathbb{N}}
\newcommand{\bbR}{\mathbb{R}}
\DeclareMathAlphabet{\mathbsf}{OT1}{cmss}{bx}{n}
\DeclareMathAlphabet{\mathssf}{OT1}{cmss}{m}{sl}% slanted sans serif
\DeclareSymbolFont{bsfletters}{OT1}{cmss}{bx}{n}  
\DeclareSymbolFont{ssfletters}{OT1}{cmss}{m}{n}
\DeclareMathSymbol{\bsfGamma}{0}{bsfletters}{'000}
\DeclareMathSymbol{\ssfGamma}{0}{ssfletters}{'000}
\DeclareMathSymbol{\bsfDelta}{0}{bsfletters}{'001}
\DeclareMathSymbol{\ssfDelta}{0}{ssfletters}{'001}
\DeclareMathSymbol{\bsfTheta}{0}{bsfletters}{'002}
\DeclareMathSymbol{\ssfTheta}{0}{ssfletters}{'002}
\DeclareMathSymbol{\bsfLambda}{0}{bsfletters}{'003}
\DeclareMathSymbol{\ssfLambda}{0}{ssfletters}{'003}
\DeclareMathSymbol{\bsfXi}{0}{bsfletters}{'004}
\DeclareMathSymbol{\ssfXi}{0}{ssfletters}{'004}
\DeclareMathSymbol{\bsfPi}{0}{bsfletters}{'005}
\DeclareMathSymbol{\ssfPi}{0}{ssfletters}{'005}
\DeclareMathSymbol{\bsfSigma}{0}{bsfletters}{'006}
\DeclareMathSymbol{\ssfSigma}{0}{ssfletters}{'006}
\DeclareMathSymbol{\bsfUpsilon}{0}{bsfletters}{'007}
\DeclareMathSymbol{\ssfUpsilon}{0}{ssfletters}{'007}
\DeclareMathSymbol{\bsfPhi}{0}{bsfletters}{'010}
\DeclareMathSymbol{\ssfPhi}{0}{ssfletters}{'010}
\DeclareMathSymbol{\bsfPsi}{0}{bsfletters}{'011}
\DeclareMathSymbol{\ssfPsi}{0}{ssfletters}{'011}
\DeclareMathSymbol{\bsfOmega}{0}{bsfletters}{'012}
\DeclareMathSymbol{\ssfOmega}{0}{ssfletters}{'012}
\newcommand{\tilf}{\widetilde{f}}
\newcommand{\tilF}{\widetilde{F}}
\newcommand{\tilG}{\widetilde{G}}
\newcommand{\tilh}{\widetilde{h}}
\newcommand{\tilbH}{\widetilde{\bH}}
\newcommand{\hatv}{\widehat{v}}
\newcommand{\hatbV}{\widehat{\bV}}
\newcommand{\tilw}{\widetilde{w}}
\newcommand{\tilbW}{\widetilde{\bW}}
\newcommand{\tilx}{\widetilde{x}}
\newcommand{\tilbZ}{\widetilde{\bZ}}
\newcommand{\barf}{\overline{f}}
\newcommand{\barD}{\overline{D}}
\newcommand{\barF}{\overline{F}}
\newcommand{\barG}{\overline{G}}
\newcommand{\tlambda}{\widetilde{\lambda}}
\newcommand{\floor}[1]{\lfloor{#1}\rfloor}
\newcommand{\lrangle}[2]{\left\langle{#1},{#2}\right\rangle}
\DeclareMathOperator*{\argmin}{arg\,min}
\DeclareMathOperator{\sgn}{sgn}
\newtheorem{theorem}{Theorem} 
\newtheorem*{theorem*}{Theorem}
\newtheorem{lemma}{Lemma}
\newtheorem{prop}{Proposition}
\theoremstyle{definition}
\newtheorem{definition}{Definition} 
\theoremstyle{remark}
\newtheorem{remark}{Remark}
\newcommand{\qednew}{\nobreak \ifvmode \relax \else
      \ifdim\lastskip<1.5em \hskip-\lastskip
      \hskip1.5em plus0em minus0.5em \fi \nobreak
      \vrule height0.75em width0.5em depth0.25em\fi}
\newcommand{\cbW}{\mathring{\bW}}
\newcommand{\cbH}{\mathring{\bH}}
\newcommand{\cbZ}{\mathring{\bZ}}
\title{A Unified Convergence Analysis of the Multiplicative Update Algorithm for Regularized Nonnegative Matrix Factorization}
\author{
Renbo~Zhao,~\IEEEmembership{Student Member,}
        ~Vincent~Y.~F.~Tan,~\IEEEmembership{Senior~Member}
        
\thanks{
An abridged version of this paper was presented at the ICASSP 2017~\cite{Zhao_17c}. %\newline
Renbo~Zhao and Vincent~Y.~F.~Tan are with the Department of Electrical and Computer Engineering and the Department of Mathematics, National University of Singapore (NUS). Renbo Zhao is also with the Department of Industrial and Systems Engineering, NUS. They are supported in part by the NUS Young Investigator Award (grant number R-263-000-B37-133).}
}
\begin{document}

\maketitle
\begin{abstract}
The multiplicative update (MU) algorithm has been extensively used   to estimate the basis and coefficient matrices in  nonnegative matrix factorization (NMF) problems under a wide range of divergences and regularizers.  However, theoretical convergence guarantees have only been derived for a few special divergences without regularization.  
In this work, we provide a conceptually simple, self-contained, and unified proof for the convergence  of the MU algorithm applied on NMF with a wide range of divergences and regularizers.
Our main result shows the sequence of iterates (i.e., pairs of basis and coefficient matrices) produced by the MU algorithm converges to the set of stationary points of the non-convex NMF optimization   problem. Our proof strategy has the potential to open up new avenues for analyzing similar  problems in machine learning and signal processing. 
\end{abstract}

\begin{IEEEkeywords}
Nonnegative Matrix Factorization, Multiplicative Update Algorithm, Convergence Analysis, Nonconvex Optimization, Stationary Points 
\end{IEEEkeywords}

\section{Introduction}
%\textcolor{red}{Unify $:=$ and $\triangleq$} 

%In recent years, 
Nonnegative Matrix Factorization (NMF) has been a popular dimensionality reduction technique in recent years, due to its non-subtractive and parts-based interpretation on the learned basis \cite{Lee_99}. In the general formulation of NMF, given a nonnegative matrix $\bV\! \in \!\bbR_+^{F\times N}$, one seeks to find a nonnegative basis matrix $\bW \in \bbR_+^{F\times K}$  and a nonnegative coefficient matrix $\bH \!\in \!\bbR_+^{K\times N}$
such that $\bV\! \approx \!\bW\bH$. To find such  pair of matrices, a popular approach is to solve  the optimization problem
\begin{equation}
\min_{\bW\ge 0, \bH\ge 0} \left[\barell(\bW,\bH)\defeq\barD(\bV\Vert\bW\bH)\right].\label{eq:batch_NMF}
\end{equation}
%In \eqref{eq:batch_NMF}, 
where $\barD(\cdot\Vert\cdot)$ denotes the {\em divergence} (or distance) between two nonnegative matrices and $\bW \ge  0$ (and $\bH \ge  0$) denotes entrywise inequality. %Moreover, $\{\phi_i(\cdot)\}_{i=1}^{m_1}$ and $\{\tphi_j(\cdot)\}_{j=1}^{m_2}$ denote the (nonnegative) regularizers on the coefficient matrix and the basis matrix respectively, and their associated regularization weights are denoted by (nonegative) $\{\lambda_i\}_{i=1}^{m_1}$ and $\{\tlambda_j\}_{j=1}^{m_2}$ respectively. 
In the NMF literature, many algorithms have been proposed to solve \eqref{eq:batch_NMF}, including multiplicative updates (MU) \cite{Lee_00,Dhillon_06,Fev_11,Yang_11c}, block principal pivoting (BPP) \cite{Kim_08a}, projected gradient descent (PGD) \cite{Lin_07a} %, active set methods (ASM)\cite{Kim_08b} 
and the alternating direction method of multipliers (ADMM) \cite{Xu_12,Sun_14}. However, some algorithms only solve \eqref{eq:batch_NMF} for certain divergences $\barD(\cdot\Vert\cdot)$. For example, the BPP algorithm
%are devised to solve the problem of alternating nonnegative least squares (ANLS), thus they 
is only applicable to the squared-Frobenius loss $\|\bV-\bW\bH\|_{\mathrm{F}}^2$. Among all the algorithms, the MU algorithm has arguably  
%been applied to a wide range of divergences, 
the widest applicability---it has been used to solve~\eqref{eq:batch_NMF} when $\barD(\cdot\Vert\cdot)$ belongs to the family of $\alpha$-divergence \cite{Cichoc_08}, $\beta$-divergence \cite{Fev_11}, $\gamma$-divergence \cite{Cichoc_10}, etc. %See \cite{Yang_11c} for a comprehensive summary of these divergences.  
%Its wide applicability is due to its ease of implementation, intuitive interpretations and weak assumptions on the objective fu
%Its wide applicability is due to its ease of implementation, intuitive interpretations and weak assumptions on the objective function $\barell(\cdot,\cdot)$. 

Despite the popularity of the MU algorithm, its convergence properties {have not been studied systematically when the divergence is not the standard squared-Frobenius loss  and when there are regularizers on $\bW$ and $\bH$}. %\footnote{\textcolor{red}{VINCENT REMARK: I modified this because later on you say that ``To understand the theoretical convergence properties of this sequence, many works have been conducted'', which contradicts your original sentence}} 
To describe this problem precisely, let $\{(\bW^t,\bH^t)\}_{t=1}^\infty$ be the sequence of  pairs of basis and coefficient matrices  generated by the MU algorithm, where $t \ge  1$ denotes the iteration index.  
Many previous works \cite{Lee_00,Cichoc_08,Fev_11} showed  that the sequence of (nonnegative) objective values $\{\barell(\bW^t,\bH^t)\}_{t=1}^\infty$   in the MU algorithm is non-increasing and hence the algorithm converges. 
However, the convergence of objective values does not imply the convergence of %the sequence of matrix pairs 
$\{(\bW^t,\bH^t)\}_{t=1}^\infty$, whose limit points (assuming they exist) serve as natural candidates for the output of the MU algorithm.
%Moreover, when the MU algorithm is used on real applications, %such as music analysis \cite{Fev_09}, topic modeling \cite{Lee_99} and source separation \cite{Cichoc_08}, 
The limit points of $\{(\bW^t,\bH^t)\}_{t=1}^\infty$ tend to be empirically appealing, i.e., they represent each column of $\bV$ (i.e., a data sample) as a linear combination of $K$ nonnegative basis vectors in a meaningful manner~\cite{Lee_99,Fev_09}. % hence are  
%For example, in the topic modeling task~\cite{Greene_14}, the columns of the learned basis matrix correctly indicate the keywords in each underlying topic. 
As such, the convergence  properties of $\{(\bW^t,\bH^t)\}_{t=1}^\infty$, and especially the optimality of its limit points, are of   theoretical and  practical importance.
% \vspace{-.1in}
\subsection{Related Works} \label{sec:related} %\vspace{-.1in}
Due to the nonconvex nature of \eqref{eq:batch_NMF}, algorithms that guarantee to converge to the global minima of \eqref{eq:batch_NMF}
are in general out-of-reach. Indeed, \cite{Vavasis_09} has shown that~\eqref{eq:batch_NMF} is NP-hard. 
To ameliorate this situation, a line of works in which structural assumptions on the data matrix $\bV$---such as the separability~\cite{Donoho_04} assumptions~\cite{Bitt_12}---has emerged. Under such an assumption, polynomial-time algorithms~\cite{Arora_12,Bitt_12,Gillis_14,Huang_14b} have been proposed to find $\bW$ and $\bH$ such that $\bW\bH$ {\em exactly} equals $\bV$.  However, for many applications in  signal processing and machine learning, the data matrix $\bV$ does not strictly satisfy the  aforementioned assumptions. % is contaminated by noise, thereby making the assumptions leveraged in these works invalid.
In such scenarios, exact (nonnegative) factorization of $\bV$ is generally infeasible.

As a result, given a general nonnegative matrix $\bV$, many works~\cite{Lee_00,Dhillon_06,Fev_11,Yang_11c,Kim_08a,Lin_07a,Xu_12,Sun_14} only aim to reduce (or preserve) the function value of $\barell(\cdot,\cdot)$ at each iteration $t$, hoping that the sequence $\{(\bW^t,\bH^t)\}_{t=1}^\infty$ will converge to a limit point that is ``reasonably good''. 
To understand the theoretical convergence properties of this sequence, many works have been conducted~\cite{Lin_07c,Gillis_08,Taka_14,Taka_14b,Kim_07,Haji_16,Fev_09b}. 
%only aims to find the stationary points (see Definition~\ref{def:stat_pt}) of~\eqref{eq:batch_NMF}. Among these works, 
%Thus   existing works mainly study  convergence to the stationary points (see Definition~\ref{def:stat_pt}) of \eqref{eq:batch_NMF}. %\footnote{} 
In particular, for the MU algorithm, some representative works include \cite{Lin_07c,Gillis_08,Taka_14,Taka_14b}. %For simplicity, all of the MU algorithms in these works % focus on analyzing the algorithms proposed for solving 
%For a special case of \eqref{eq:batch_NMF}, namely 
When the divergence $\barD(\bV\Vert\bW\bH)=\frac{1}{2}\norm{\bV-\bW\bH}_\rmF^2$,   Lin~\cite{Lin_07c} and Gillis and Glineur~\cite{Gillis_08} modified the algorithm originally proposed in~\cite{Lee_00} (in different ways), and proved the convergence of the modified algorithms to the set of stationary points\footnote{See Definition~\ref{def:conv_set} for the definition of convergence of a sequence to a set.} of~\eqref{eq:batch_NMF}. However, their approaches cannot be easily generalized to other divergences, e.g., the (generalized) Kullback-Leibler (KL) divergence. To overcome this restriction, the authors of \cite{Taka_14} and \cite{Taka_14b} modified the nonnegativity constraints on $\bW$ and $\bH$ in~\eqref{eq:batch_NMF} to $\bW \ge  \epsilon$ and $\bH \ge  \epsilon$, for some $\epsilon > 0$. Accordingly, they developed algorithms for this ``positive matrix factorization'' problem~\cite{Paa_94} with a wider class of divergences (including the $\beta$-divergences) and showed that their algorithms converge  to the stationary points of the new problem. However, the positivity constraints on $\bW$ and $\bH$ are restrictive and changes the original NMF problem in~\eqref{eq:batch_NMF} {substantially}. Therefore the convergence analyses in \cite{Taka_14} and \cite{Taka_14b} are not applicable to the MU algorithms for the canonical NMF problem ({in which $\bW$ and $\bH$ are allowed to have {\em zero} entries}), which is of  interest in  many applications. %adopted a different analysis approach, which can be applied to a wider class of divergences (including the $\beta$-divergences). However, in these two works, the nonnegativity constraints on $\bW$ and $\bH$ in~\eqref{eq:batch_NMF} were modified to $\bW \ge  \epsilon$ and $\bH \ge  \epsilon$, for some $\epsilon > 0$. In other words, 
%\begin{equation}
%\min_{\bW\ge 0, \bH\ge 0} \frac{1}{2}\norm{\bV-\bW\bH}_F^2.\label{eq:sqFro_batch_NMF}
%\end{equation}
%, the state-of-the-art analysis was done by Lin~\cite{Lin_07c}. 
%In particular, a principled and rigorous analysis was performed in \cite{Lin_07c}. %the author considers the case where $\barD(\cdot\Vert\cdot)$ is the squared-Frobenius loss without  regularizations on $\bW$ or $\bH$. By modifying 
%In \cite{Lin_07c}, Lin modifies the MU algorithm proposed in \cite{Lee_00}, %for \eqref{eq:sqFro_batch_NMF}, 
%and shows the sequence of iterates $\{(\bW^k,\bH^k)\}_{k=1}^\infty$ generated by this algorithm converges to the set of stationary points\footnote{See Definition~\ref{def:conv_set} for the definition of convergence of a sequence to a set.}  of \eqref{eq:batch_NMF}. %The convergence analysis in \cite{Lin_07c} is . 
%%The basic idea therein is to interpret the MU algorithm as a gradient-descent algorithm (with step sizes being judiciously chosen) and leverage the KKT conditions to check the optimality of the limit points. In addition, Lin also provides a comprehensive survey on the analysis techniques prior to \cite{Lin_07c}. 
%Later, the authors of \cite{Gillis_08} propose different modifications of the MU algorithm in \cite{Lee_00} %(with the squared Frobenius loss) 
%and then provide sound convergence analyses accordingly. 
In another related work~\cite{Badeau_10}, the authors analyzed the stability of local minima of \eqref{eq:batch_NMF}
%the objective function $\barell(\cdot,\cdot)$ 
under the MU algorithm, where $\barD(\cdot\Vert\cdot)$ belongs to the class of $\beta$-divergences. However, the stability analysis therein does not yield definite answers on  whether (and when) the MU algorithm converges to any local minimum (or even stationary point) of $\barell(\cdot,\cdot)$ if the algorithm is started at an arbitrary (feasible) starting point. 
%For other algorithms that aim to solve \eqref{eq:batch_NMF}, some rigorous convergence analyses have been done in \cite{Kim_07,Haji_16,Fev_09b}. 
%However, all of the analyses are confined to some special cases of $\barD(\cdot\Vert\cdot)$, including the Itakura-Saito (IS), (generalized) Kullback-Leibler (KL) or squared-Frobenius losses.

\subsection{Motivations and Main Contributions}%\vspace{-.1in}
In this work, we analyze the convergence of the MU algorithm for {\em regularized} NMF problems with a general class of divergences, termed {\em $h$-divergences} (see Definition~\ref{def:general_div}) in a {\em unified} manner. The set of $h$-divergences includes many important classes of divergences, including (but not limited to)  $\alpha\,(\alpha\ne 0)$, $\beta$, $\gamma$, $\alpha$-$\beta$ and R\'enyi divergences. For each class of divergences, the corresponding MU algorithm has been proposed in the literature~\cite{Cichoc_08,Fev_11,Cichoc_11,Cichoc_10}, but without convergence guarantees. In addition, we also include regularizers on $\bW$ and $\bH$ in the objective function. The purpose of including regularizers are twofold: (i)   convenience of mathematical analysis and (ii) increased generality of problem setting. Although many MU algorithms have been proposed for NMF problems with various regularizers~\cite{Hoyer_02,Cai_11,Tasla_12,Mirzal_14}, thus far, the convergence analyses of these algorithms are still lacking. The absence of  theoretical  convergence guarantees for the MU algorithms in the abovementioned cases thus becomes a major motivation of our work. 

Our contributions consist of two parts. First, we develop a unified MU algorithm for the NMF problem~\eqref{eq:batch_NMF}  with any (weighted) $h$-divergence and $\ell_{1,1}$ (and Tikhonov) regularizers on $\bW$ and $\bH$. Our algorithm subsumes many existing algorithms in previous works~\cite{Lee_00,Dhillon_06,Fev_11,Yang_11c} as special cases. From our update rules, we discover that minimizing $\barD(\bV\Vert\bW\bH)$ with the $\ell_{1,1}$  regularization on $\bW$ and $\bH$ corresponds to a stability-preserving heuristic commonly employed in implementing the MU algorithms. (See Remark~\ref{rmk:MU} for details.) Therefore, this justifies the need to {incorporate $\ell_{1,1}$ regularization into the NMF objective}. Second, we conduct a novel convergence analysis for this unified MU algorithm, by making innovative use of the recently-proposed {\em block majorization-minimization} framework~\cite{Raza_13,Hong_16}. Our results show that the sequence of iterates $\{(\bW^t,\bH^t)\}_{t=1}^\infty$ generated from our MU algorithm has at least one limit point and any limit point of this sequence is a stationary point of \eqref{eq:batch_NMF}. Thus, for the first time,  it is shown that the host of  MU algorithms in the NMF literature~\cite{Cichoc_08,Fev_11,Cichoc_11,Cichoc_10,Hoyer_02,Cai_11,Tasla_12,Mirzal_14} enjoys  strong theoretical convergence guarantees. 

\subsection{Notations}%\vspace{-.1in}
In this paper we use $\bbR_+$, $\bbR_{++}$ and $\bbN$ to denote the set of nonnegative real numbers, positive real numbers and natural numbers (excluding zero) respectively. For $n\in\bbN$, we define $[n]\defeq \{1,2,\ldots,n\}$. 
%For any $k,l\in\bbN$, $\delta_{kl}$ denotes the Kronecker delta function. 
We use boldface capital letters, boldface lowercase letters and plain lowercase letters to denote matrices, vectors and scalars respectively. 
%For a nonnegative scalar $x$, we define its sign, $\sgn(x)$ as
%\begin{equation}
%\sgn(x)\defeq \left\{\hspace{-.2cm}\begin{array}{ll}
%1,&x>0\\
%0,&x=0
%\end{array}\right..
%\end{equation}
%For matrices $\bX$ and $\bY$, we use $\bX\odot\bY$, $\bX/\bY$ and $\lrangle{\bX}{\bY}$ to denote their Hadamard product, entrywise quotient and Frobenius inner product respectively.
For a vector $\bx$, we denote its $i$-th entry, $\ell_1$ and $\ell_2$ norms as $x_i$, $\norm{\bx}_1$ and $\norm{\bx}_2$ respectively. 
For a matrix $\bX$, we denote %its its $i$-th row as $\bx^i$ and %$j$-th column as $\bx_j$,  
its $(i,j)$-th entry as $x_{ij}$ %(or $\left[\bX\right]_{ij}$)   
and its $\ell_{1,1}$ norm as $\norm{\bX}_{1,1}\defeq \sum_{ij} \abs{x_{i,j}}$. 
In addition, for a scalar $\delta \in \bbR$, %we use $\bX\ge 0$ and $\bX>0$ to denote entrywise nonnegativity and positivity respectively. 
we use $\bX=\delta$ and $\bX\ge \delta$ to denote entrywise equality and inequality. 
For matrices $\bX$ and $\bY$, we use $\bX\odot\bY$ and $\lrangle{\bX}{\bY}$ to denote their Hadamard product and Frobenius inner product respectively.
%For a set $\calS$, we use $\inter\calS$, $\bdr\calS$ and $\cl\calS$ to denote its interior, boundary and closure respectively. 
We use $\eqcst$ to denote equality up to additive constants. %(whose meanings depend on the context). 
%We also use $\vecz$ and $\vecone$ to denote the vector or matrix with all its entries equal to zero and one respectively. The dimensions of $\vecz$ and $\vecone$ depend on the context. 
In this work, technical lemmas (whose indices begin with `T') will appear in Appendix~\ref{app:tech_lemma}.  

\section{Problem Formulation} \label{sec:problem_form}
\subsection{Definition of $h$-Divergences}
Before introducing the notion of {\em $h$-divergences}, we first define an important function 
\begin{equation} 
h(\sigma,t)\defeq \left\{\hspace{-.2cm}\begin{array}{ll}
(\sigma^t-1)/t, &t\ne 0\\
\log\sigma, & t=0
\end{array}\right., \label{eq:def_h}
\end{equation}
where for any $t \in \bbR$, the domain of $\sigma$ is given by the natural domain of $\sigma \mapsto h(\sigma,t)$, denoted as $\Xi_t$.  To be more explicit, $\Xi_t=[0,\infty) $ if $t\ne 0$ and $\Xi_t=(0,\infty)$ if $t=0$. 

\begin{definition}[$h$-divergences~{\cite[Section~IV]{Yang_11c}}]\label{def:general_div}
Given any $\bV \in \bbR_{+}^{F \times  N}$, $D(\bV\Vert\cdot):\bbR_{+}^{F \times  N} \to \bbR_+$ is called a $h$-divergence if for any $\hatbV \in \bbR_{+}^{F \times  N}$ , there exists a constant $P \ge  2$ %$\{\mu_p\}_{p\in[P]}$, $\{\zeta_p\}_{p\in[P]}$, $\{\xi_p\}_{p\in[P]}$ and $\{\nu_{pij}\}_{(p,i,j)\in[P]\times[F]\times[N]}\subseteq\bbR$ (all independent of $\hatbV$), 
such that
\begin{equation}
\hspace{-.2cm}D(\bV\Vert\hatbV)\eqcst\sum_{p=1}^P\mu_p h\left(\sum_{i=1}^F\sum_{j=1}^N\nu_{pij}h(\hatv_{ij},\zeta_p),\,\xi_p\right),\label{eq:general_div}
\end{equation}
where `$\eqcst$' omits constants that are independent of $\hatbV$ and $\mu_p$, $\nu_{pij}$, $\zeta_p$ and $\xi_p$ are all real constants independent of $\hatbV$. In addition, $\{\zeta_p\}_{p=1}^P$ are distinct and for any $i \in [F]$ and $j \in [N]$, there exists $p' \in [P]$ such that $\nu_{p'ij}\ne 0$. 
%In addition, for any $\bV\in\bbR_{++}^{m\times n}$, there exists a $\bV^*\ne \vecz$ such that $\nabla_{\hatbV} D(\bV\Vert\hatbV)\big\vert_{\hatbV=\bV^*} = \vecz$.
%In \eqref{eq:general_div}, %We also denote the set of all the $h$-divergences as $\calD$. 
\end{definition}

\begin{remark}[Scope of $h$-divergences]\label{rmk:general_div}
%Several remarks are in order.
By choosing the constants  $\mu_p$, $\nu_{pij}$, $\zeta_p$  and $\xi_p$ in different ways, we obtain different $h$-divergences. 
The $h$-divergences subsume many important classes of divergences, including the families of $\alpha\,(\alpha \ne  0)$, $\beta$, $\gamma$, $\alpha$-$\beta$ and R\'enyi divergences~\cite{Cichoc_08,Fev_11,Cichoc_11,Cichoc_10}.  In particular, some important instances in the $h$-divergences include the Hellinger, Itakura-Saito (IS), KL and squared-Frobenius divergences. Each instance can be obtained by appropriately choosing the  constants $\{\mu_p\}_p$, $\{\nu_{pij}\}_{p,i,j}$, $\{\zeta_p\}_{p}$ and $\{\xi_p\}_{p}$.  
See Remark~\ref{rmk:sep_hDiv} for an example of how~\eqref{eq:general_div} yields the KL divergence with an appropriate set of parameters.
%\textcolor{red}{this is a bit obscure. I think we can have a bit more elaboration here on how various settings of $\mu$, $\nu$, ... results in these known divergences--for reviewers who may not be acquainted to NMF lit. In \eqref{eq:general_div} can you also state which def of \cite{Yang_11c} you're using? I couldn't find it. i don't see the ``nested'' $h$ in his paper, whereas your nested $h$ in \eqref{eq:general_div} can be quite confusing to the unacquainted. }
\end{remark}
%All of these divergences have appeared in the NMF literature \cite{Cichoc_08,Fev_11,Cichoc_11,Cichoc_10}.
\begin{remark}[Separable $h$-divergences]\label{rmk:sep_hDiv}
When $\mu_p=\xi_p=1$, for all $p\in[P]$, $D(\bV\Vert\cdot)$ is separable across the entries of $\hatbV$, i.e.,
\begin{equation}
D(\bV\Vert\hatbV)\eqcst\sum_{i=1}^F\sum_{j=1}^N\sum_{p=1}^P\nu_{pij}h(\hatv_{ij},\zeta_p). %\label{eq:sep_div}
\end{equation}
%In the sequel, 
We term such a divergence as a {\em separable $h$-divergence}. In particular, any member in the classes of $\alpha$- (for $\alpha\ne 0$)  or $\beta$-divergences is separable. For example, taking $P=2$, $\nu_{1ij}=-v_{ij}$, $\zeta_1=0$, $\nu_{2ij}=1$ and $\zeta_2=1$, we obtain the KL divergence, which belongs to both classes ($\alpha$- and $\beta$-divergences). %the $\alpha$- and $\beta$-divergence families. 
%Third, we require both $\bV$ and $\hatbV$ to lie in the {\em positive} orthant to avoid division by zero and taking logarithm w.r.t.\ zero. As will be seen in Algorithm~\ref{algo:general}, the (entrywise) positivity of $\hatbV$ at each iteration can be guaranteed. To ensure the positivity of the data matrix $\bV$, in practice, it is common to set zero entries of $\bV$ to small positive numbers in the data-preprocessing phase.
\end{remark}
\begin{remark}[Weighted $h$-divergences]
For some special instances in the class of $h$-divergences, such as squared Euclidean distance and KL divergence, a weighted version has been proposed and studied in the literature~\cite{Guill_01,Guill_03,Ho_08,Gu_10,Zheng_15}. Based on Definition~\ref{def:general_div}, we can also define {\em weighted} $h$-divergences, which subsume the aforementioned weighted divergences as special cases. Given a nonnegative matrix $\bM \in \bbR_{+}^{F \times N}$, define its support $\Omega(\bM) \defeq \{(i,j) \in[F] \times [N]:m_{ij} > 0\}$. Also, for any $p \in [P]$, define $\nu'_{pij}\defeq \nu_{pij}m_{ij}$. For any $h$-divergence $D(\bV\Vert\cdot)$, define its $\bM$-weighted version $D_{\bM}(\bV\Vert\cdot) : \bbR^{F\times N} \mapsto \bbR_+$ as  
\begin{equation}
\hspace{-.2cm}D_{\bM}(\bV\Vert\hatbV)\eqcst\sum_{p=1}^P\mu_p h\left(\sum_{(i,j)\in\Omega(\bM)}\nu'_{pij}h(\hatv_{ij},\zeta_p),\,\xi_p\right).\label{eq:weighted_div}
\end{equation}
%\textcolor{red}{ The above $D$ should be $D_{\bM}$ right? I changed for u}
Comparing \eqref{eq:weighted_div} to~\eqref{eq:general_div}, we observe that the only changes are the constants $\{\nu_{pij}\}_{pij}$. These constants are independent of $\hatbV$. Therefore, our algorithms and convergence analysis developed for the $h$-divergences are also applicable to their weighted counterparts. Indeed, the weighted $h$-divergences in~\eqref{eq:weighted_div} are more general than $h$-divergences in~\eqref{eq:general_div}, since by choosing $\bM$ such that $m_{ij}=1$ for any $i$ and $j$, we recover~\eqref{eq:general_div} from~\eqref{eq:weighted_div}. 
\end{remark}
\subsection{Optimization Problem}
%In this work,
For convenience, first define two functions $\phi_1(\cdot)  \defeq \norm{\cdot}_{1,1}$ and $\phi_2 (\cdot)  \defeq  \norm{\cdot}_\rmF^2$.  The first and second  functions are known as the $\ell_{1,1}$ and Tikhonov regularizers respectively.
Accordingly, for any $\bV\in \bbR_{+}^{F\times N}$, define the regularized objective  function
\begin{equation}
\ell(\bW,\bH) \defeq D(\bV\Vert\bW\bH) + \sum_{i=1}^2 \lambda_i \phi_i(\bW) + \sum_{j=1}^2\tlambda_j \phi_j(\bH),\label{eq:new_obj}
\end{equation}
where $\bW\!\in\!\bbR_{+}^{F\!\times \!K}$, $\bH\!\in\!\bbR_{+}^{K\!\times \!N}$, $\lambda_1,\tlambda_1\!>\!0$ and $\lambda_2,\tlambda_2\!\ge\! 0$. 
The optimization problem  in this work can be stated {succinctly} as 
\begin{equation}
\min_{\bW\in\bbR_{+}^{F\times K}, \bH\in\bbR_{+}^{K\times N}}\ell(\bW,\bH). \label{eq:new_opt}
\end{equation}
%where %$K<\min(F,N)$ and 

%In \eqref{eq:new_obj}, $\bV\in \bbR_{+}^{F\times N}$, %$\{\lambda_i\}_{i\in[2]}$, $\{\tlambda_j\}_{j\in[2]}\subseteq \bbR_+$ 
%$\lambda_1,\tlambda_1>0$ and $\lambda_2,\tlambda_2\ge 0$. %and for any nonnegative matrix $\bX$,
%\begin{equation}
%$\phi_1(\bX) \defeq \norm{\bX}_{1,1}\defeq \sum_{i,j} x_{ij}$ and  $\phi_2(\bX) \defeq \norm{\bX}_F^2$.
%\end{equation}

\begin{remark}[Explanations for the elastic-net regularization]
The above regularizers involving $\phi_1(\cdot)$ and $\phi_2(\cdot)$ are collectively known as the {\em elastic-net regularizer}~\cite{Zou_05}  in the literature. 
%includes   the $\ell_{1,1}$ and Tikhonov regularizers as special cases, 
%We explain why we focus on the so-called {\em elastic-net regularizer}~\cite{Zou_05} on $(\bW,\bH)$. %are termed the elastic-net regularizer in the literature . 
%This regularizer includes the $\ell_{1,1}$ and Tikhonov regularizers as special cases, 
Both $\ell_{1,1}$ and Tikhonov regularizers have been widely employed in the NMF literature. Specifically, the $\ell_{1,1}$ regularizer promotes element-wise sparsity on   the basis matrix $\bW$ and coefficient matrix $\bH$~\cite{Hoyer_02}, thereby enhancing  the interpretability of both basis vectors and the %(nonnegative) 
conic combination model in NMF.  
The Tikhonov regularizer promotes smoothness on $\bW$ and $\bH$ and also prevents overfitting~\cite{Pauca_06}. 
\end{remark}
\begin{remark}[Positivity of $\lambda_1$ and $\tlambda_1$]\label{rmk:pos_lambda}
We require both $\lambda_1$ and $\tlambda_1$ to be positive for both convenience of analysis and numerical stability. Specifically, the inclusion of $\ell_{1,1}$ regularization on $\bW$ and $\bH$ ensures that  both $\bW \mapsto \ell(\bW,\bH)$ and $\bH \mapsto \ell(\bW,\bH)$ coercive, a property that we will leverage in our analysis. In addition, as will be shown in Proposition~\ref{proposition:deriv_MU}, the positivity of $\lambda_1$ and $\tlambda_1$ prevents the denominators in the multiplicative update rules of $\bW$ and $\bH$  from being arbitrarily close to zero, thereby ensuring that the updates in the MU algorithm are  numerically stable.  %originates from a commonly used heuristic in the MU algorithm that ensures numerical stability in the updates. %Specifically, when these two parameters are very small, minimizing \eqref{eq:new_obj} amounts to adding a small positive number in the denominator of the multiplicative factor. 
%See Remark~\ref{rmk:MU} for details. 
%Also note that in such case, the effect of such regularizers on sparsity of $\bW$ or $\bH$ is negligible. 
%on the coefficient vectors and the basis matrices respectively. 
%Any of the regularizers can be eliminated by simply setting the corresponding regularization weight to zero. 
\end{remark}

\section{Algorithms}

In this section we first define the notions of {surrogate functions} and {first-order surrogate functions}. 
Next, we present a general framework for deriving the MU algorithm for the problem~\eqref{eq:new_opt}, based on {\em majorization-minimization}~\cite{Yang_11c}. This framework is sufficient for our convergence analysis. However, as  side contributions, we also present a {\em systematic} procedure to construct   first-order surrogate functions of $(\bW,\bH) \mapsto \ell(\bW,\bH)$ for $\bW$ (resp.\ $\bH$) and to derive the specific multiplicative update rules for  $\bW$ (resp.\ $\bH$).  {Finally, we discuss how to apply these techniques to the family of $\alpha$-divergences and how to extend the techniques to the dual KL divergence.} %These update rules are based on %
%minimizing . %Finally, based on these functions, we derive the 
%In this section we first introduce the notion of {\em first-order surrogate function}. Based on it, we present our framework of deriving multiplicative updates for the $h$-divergences.  Next, we introduce an effective method to construct the first order surrogate functions for the $h$-divergences. An example is given in terms of the $\alpha$-divergence. 
 %This class of functions will play an important role in the convergence analysis (shown in Section~\ref{sec:conv_analysis}).
\subsection{First-Order Surrogate Functions}
\begin{definition}\label{def:first_order_surrogate}
Given $n$ finite-dimensional real Euclidean spaces $\{\calX_i\}_{i=1}^n$, define $\calX \defeq \prod_{i=1}^n\calX_i$. For any $x \in \calX$, denote its $n$-block form as $(x_1,\ldots,x_n)$, where for any $i \in [n]$,  $x_i \in \calX_i$ denotes the $i$-th block of $x$. %  is the $i$-th block %(subvector) 
%of $x$. %(hence $m=\sum_{i\in[n]}m_i$). 
Consider a differentiable function $f : \calX \to \bbR$. For any $i \in [n]$, a {\em first-order surrogate function} of $(x_1,\ldots,x_n) \mapsto f(x_1,\ldots,x_n)$ for %the $i$-th block 
$x_i$, %at $\tilx\in\bbR_+^m$, 
denoted as $F_i(\cdot\,|\,\cdot) : \calX_i \times \calX \to \bbR$, satisfies the following five properties: 
\begin{enumerate}[label={\bf (P\arabic*)},ref=\textbf{(P\arabic*)}]
\item \label{prop:zero_eq} $F_i(\tilx_i\,|\,\tilx) = f(\tilx)$, for any $\tilx\in\calX$, 
\item \label{prop:zero_ge} $F_i(x_i\,|\,\tilx)\ge f(\tilx_1,\ldots,x_i,\ldots,\tilx_n)$, for any $(x_i,\tilx) \in \calX_i \times \calX$. %$x_i\in\calX_i$ and $\tilx\in\calX$, 
\item \label{prop:mu} $F_i(\cdot\,|\,\cdot)$ is differentiable on $\calX_i\times\calX$ and for any $\tilx\in\calX$, there exists a function $g(\cdot\,|\,\tilx) : \calX_i \to \bbR$  such that $\nabla F_i(\cdot\,|\,\tilx)= g(\cdot/\tilx_i\,|\,\tilx)$ on $\calX_i$. %, for any $x_i\in\calX_i$,
\item \label{prop:first_eq} %$\nabla_{x_i} F_i(x_i\,|\,\tilx)\vert_{x_i=\tilx_i}=\nabla f(\tilx)$, for any $\tilx\in\calX$,
 $\nabla_{x_i} F_i(x_i\,|\,\tilx)\vert_{x_i=\tilx_i}  = \nabla_{x_i} f(\tilx_1,\ldots,x_i,\ldots,\tilx_n)\vert_{x_i=\tilx_i}$, for any $\tilx\in\calX$,
\item \label{prop:strict_convex} $F_i(\cdot\,|\,\tilx)$ is strictly convex on $\calX_i$,  for any $\tilx\in\calX$.
\end{enumerate}
If $F_i(\cdot\,|\,\cdot)$ only satisfies properties \ref{prop:zero_eq} to \ref{prop:mu}, then it is called a {\em surrogate function} of $f$ for $x_i$. Note that in general, (first-order) surrogate functions {\em may not be unique}. 
%For any $i\in[n]$ and any $\tilx\in\calX$, we also define
%\begin{equation}
%\calF_i(\tilx)\defeq \{F(\cdot\,|\,\tilx)\,|\,F(\cdot\,|\,\tilx)\mbox{ satisfies \ref{prop:zero_eq} to \ref{prop:strict_convex}}\}.
%\end{equation}
\end{definition}
\begin{remark}[Implications of properties \ref{prop:zero_eq} to \ref{prop:strict_convex}]
%We now explain the implications of the five properties in Definition~\ref{def:first_order_surrogate}. 
%First, 
From \ref{prop:strict_convex}, we know the minimizer of $F_i(\cdot\vert\tilx)$ over $\calX_i$ is unique. Let us denote it as $x^*_i$. 
%\begin{align}
%x^*_i & \defeq \argmin_{x_i\in\calX_i} F_i(x_i\vert\tilx),\label{eq:min_Fi}
%\end{align} where the uniqueness of the minimizer in \eqref{eq:min_Fi} is guaranteed by \ref{prop:strict_convex}. 
From both \ref{prop:zero_eq} and \ref{prop:zero_ge}, we can deduce that $f(\tilx_1,\ldots,x_i^*,\ldots,\tilx_n)\le f(\tilx)$.
%Moreover, define $x^*  \defeq (\tilx_1,\ldots,x^*_i,\ldots,\tilx_n)$, then \ref{prop:zero_eq} and \ref{prop:zero_ge} together ensure $f(x^*)\le f(\tilx)$. 
In addition, \ref{prop:mu} ensures that minimizing $x_i \mapsto F_i(x_i\,\vert\,\tilx)$ over $\calX_i$ yields a multiplicative update for the $i$-th block $x_i$. %(see Section~\ref{sec:deriv_MU}). 
Finally, \ref{prop:first_eq} ensures that for any $\tilx \in \calX$ and $i \in [n]$, the gradient of $x_i \mapsto F_i(x_i\,\vert\,\tilx)$ agrees with that of $x_i \mapsto f(\tilx_1,\ldots,x_i,\ldots,\tilx_n)$ at $\tilx_i$. This property will be leveraged in our convergence analysis. 
%justifies the term ``first-order'', and its implication will be seen in the proof of Theorem~\ref{thm:main}. 
\end{remark}

\begin{remark}[Constant difference]
With a slight abuse of terminology, we shall term any function $x_i \mapsto \tilF_i(x_i\,\vert\,\tilx)$ a (first-order) surrogate function if it differs from $x_i \mapsto F_i(x_i\,\vert\,\tilx)$ by a constant that is independent of $x_i$. This is because such a constant difference does not affect the minimizer(s) of $F_i(\cdot\vert \tilx)$ or $\tilF_i(\cdot\vert \tilx)$ over $\calX_i$ or their gradients w.r.t.\ $x_i$. %$\min_{x_i\in\calX_i}f()$
Consequently, it does not affect the resulting multiplicative updates for $x_i$ or the convergence analysis in Section~\ref{sec:conv_analysis}. 
\end{remark}

\subsection{General Framework for Multiplicative Updates}
The general framework for deriving the MU algorithm for~\eqref{eq:new_opt} is shown in Algorithm~\ref{algo:general}, where 
%In \eqref{eq:upd_W} and \eqref{eq:upd_H}, 
%for any $t\in\bbN$, given any nonnegative $\tilbW$ and $\tilbH$, 
$G_1(\cdot\vert\cdot)$ and $G_2(\cdot\vert\cdot)$ denote the first-order surrogate functions of $(\bW,\bH) \mapsto \ell(\bW,\bH)$ %at $(\tilbW,\tilbH)$ 
for $\bW$ and $\bH$ respectively. %Specifically, under Definition~\ref{def:first_order_surrogate}, $\calX=\bbR_{++}^{F\times K}\times \bbR_{++}^{K\times N}$, $x=(\bW,\bH)$ and $f=\ell$.
As will be shown in Proposition~\ref{proposition:deriv_MU}, the minimization steps in~\eqref{eq:upd_W} and \eqref{eq:upd_H} indeed result in multiplicative updates for $\bW$ and $\bH$ respectively. 

\begin{algorithm}[t]
\caption{General Framework for Multiplicative Updates} \label{algo:general}
\begin{algorithmic} 
\State {\bf Input}: Data matrix $\bV \in \bbR^{F \times N}$, latent dimension $K$, regularization weights $\lambda_1,\tlambda_1 > 0$ and $\lambda_2,\tlambda_2 \ge  0$, maximum number of iterations $t_{\max}$%and divergence $D(\cdot\Vert\cdot)\in\calD$ 
%maximum number of iterations $T_{\rm max}$
\State {\bf Initialize} %basis matrix 
$\bW^0 \in \bbR_{++}^{F \times K}$, %coefficient matrix 
$\bH^0 \in \bbR_{++}^{K \times N}$ %iteration index $t:=0$
\State {\bf For} $t=0,1,\ldots,t_{\max}-1$
\begin{align}
%&\hspace{-2cm}\mbox{Construct a first-order surrogate function of } D(\bV\Vert\cdot) \mbox{ for } U_1(\cdot|) \\
\bW^{t+1} &:= \argmin_{\bW\in\bbR_{+}^{F\times K}} G_1(\bW\vert\bW^t,\bH^t)\label{eq:upd_W}\\
\bH^{t+1} &:= \argmin_{\bH\in\bbR_{+}^{K\times N}} G_2(\bH\vert\bW^{t+1},\bH^t)\label{eq:upd_H}
%t&:=t+1
\end{align}
%$\del\abs{\ell(\bW^t,\bH^t) -\ell(\bW^{t+1},\bH^{t+1})}$
%\State {\bf Until} some convergence criterion is met
\State {\bf End}
\State {\bf Output}: Basis matrix $\bW^{t_{\max}}$ and coefficient matrix $\bH^{t_{\max}}$
\end{algorithmic}
\end{algorithm}

\subsection{Construction of First-Order Surrogate Functions} % and Derivation of Multiplicative Updates}
\label{sec:construct_first_order}
% Proposition 1
We only focus on constructing a first-order surrogate function of $(\bW,\bH) \mapsto \ell(\bW,\bH)$ for $\bW$, and when $D(\bV\,\Vert\,\cdot)$ is a {\em separable} $h$-divergence. By symmetry between $\bW$ and $\bH$, such a surrogate function for $\bH$ can be easily obtained (by taking transposition). In addition,  since $h(\cdot,t)$ is either convex or concave for $t \in \bbR$ (cf.\ Lemma~\ref{lem:reg_h}), $D(\bV\,\Vert\,\cdot)$ in~\eqref{eq:general_div} is a {\em difference-of-convex function}. Therefore, by using either Jensen's inequality or a first-order Taylor expansion, the nonseparable $h$-divergence can be easily converted to the separable one. Such conversion techniques are common in the NMF literature~\cite{Yang_11c,Fev_11,Mairal_15}.

\begin{prop}%[Construction of the first-order function]
\label{proposition:construction} %for \eqref{eq:new_obj} with separable $D(\cdot\Vert\cdot)$; ]
%Let $\bV\in\bbR_{+}^{F\times N}$ and $D(\bV\Vert\cdot)$ be a separable $h$-divergence. 
For any $\bV\in\bbR_{+}^{F \times  N}$, $\bW \in \bbR_+^{F \times K}$, $\tilbW \in \bbR_{++}^{F \times K}$, $\tilbH \in \bbR_{++}^{K \times N}$, and $\vartheta_{1},\vartheta_{2} \in \bbR$, define $\tilbZ \defeq (\tilbW,\tilbH)$ and a function
\begin{align}
&\hspace{-.1cm}G_1(\bW\vert\tilbZ)\defeq \sum_{i=1}^F\sum_{k=1}^K \left[(s_{ik}^++\lambda_1)\tilw_{ik}h\left(\frac{w_{ik}}{\tilw_{ik}},\vartheta_{2}\right)\right.\nn\\
&\quad\; \left.+2\lambda_2\tilw_{ik}^2h\left(\frac{w_{ik}}{\tilw_{ik}},\vartheta_{2}\right)- s_{ik}^- \tilw_{ik}h\left(\frac{w_{ik}}{\tilw_{ik}},\vartheta_{1}\right)\right],\label{eq:first_order_surr_W}
\end{align}
where $\bS^+$ and $\bS^-$ %(both in $\bbR_{+}^{F\times K}$) are defined 
are the sums of positive and unsigned negative terms  in $\nabla_\bW D(\bV\Vert\bW\tilbH)\big\vert_{\bW=\widetilde{\bW}}$ respectively (cf.~\cite{Lee_00}).\footnote{Note that the decomposition of $\nabla_\bW D(\bV\Vert\bW\tilbH)\big\vert_{\bW=\widetilde{\bW}}$ into the positive and negative terms is not unique. However, Proposition~\ref{proposition:construction} (and hence Proposition~\ref{proposition:deriv_MU}) holds for any of such decompositions.} %Also let $\zeta_{\max}$, $\zeta_{\min}$, $\bS^+$ and $\bS^-$ be given as in Lemma~\ref{lem:surr_sep}.
%For any $\tilbW\in\bbR_{++}^{F\times K}$ and $\tilbH\in\bbR_{++}^{K\times N}$, define
%\begin{equation}
%$\zeta'_{\max} \defeq  \max\{\zeta_{\max},2\sgn(\lambda_2)\}$, %\label{eq:zeta_max_prime}
%\end{equation}
Then for any separable $h$-divergence $D(\bV\Vert\cdot)$, there exist real numbers $\vartheta_{1}<\vartheta_{2}$ %\in\bbR$ %, $\zeta_{\min}\le 1 \le \zeta_{\max}$ 
% , $\zeta_{\min} < \zeta_{\max}$, 
such that $G_1(\cdot\vert\cdot)$   %a first-order surrogate functon of 
is a first-order surrogate function of  $(\bW,\bH) \mapsto \ell(\bW,\bH)$ with respect to the variable~$\bW$. %(in $\bbR_{++}^{F\times K}$) 
%at $(\tilbW,\tilbH)$
 %(up to an additive constant that is independent of $\bW$).\footnote{Since the constant does not affect the minimizer of a first-order surrogate function over $\bW \in \bbR_{+}^{F \times K}$, in the sequel we will omit the words in the parenthesis for brevity.} %Here $\bS^+$ and $\bS^-$ (both in $\bbR_{+}^{F\times K}$) are defined as the sums of positive and unsigned negative terms  (cf.~\cite{Lee_00}) in $\nabla_\bW D(\bV\Vert\bW\tilbH)\big\vert_{\bW=\widetilde{\bW}}$ respectively.% (\textcolor{red}{ref for positive and negative terms of ...}).
\end{prop}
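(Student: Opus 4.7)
The plan is to verify the five properties (P1)--(P5) for $G_1(\cdot|\cdot)$ as defined in \eqref{eq:first_order_surr_W}, with most of the effort going into (P2). Since $h(1,t)=0$ for every $t\in\bbR$, all terms of $G_1(\tilbW|\tilbZ)$ vanish, which equals $\ell(\tilbW,\tilbH)$ up to an additive constant that is independent of $\bW$; by the constant-difference convention this yields (P1). Because $h'(\sigma,t)=\sigma^{t-1}$, each partial derivative $\partial G_1/\partial w_{ik}$ factors as a function of the ratio $w_{ik}/\tilde w_{ik}$, giving (P3), and evaluating at $\bW=\tilbW$ collapses every $h'$ to $1$ so that $\nabla_\bW G_1(\bW|\tilbZ)|_{\bW=\tilbW}=\bS^+ -\bS^- +\lambda_1\mathbf{1}+2\lambda_2\tilbW$, which matches $\nabla_\bW\ell(\bW,\tilbH)|_{\bW=\tilbW}$ by the definition of $\bS^\pm$; this verifies (P4). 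The positivity of $\lambda_1$ makes each coefficient $(s_{ik}^+ +\lambda_1)\tilde w_{ik}$ strictly positive, so choosing $\vartheta_2>1$ makes $w_{ik}\mapsto \tilde w_{ik}h(w_{ik}/\tilde w_{ik},\vartheta_2)$ strictly convex, and (P5) follows.

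The main task is (P2), namely $G_1(\bW|\tilbZ)\ge \ell(\bW,\tilbH)-\ell(\tilbW,\tilbH)$. The identities $w_{ik}=\tilde w_{ik}[1+h(w_{ik}/\tilde w_{ik},1)]$ and $w_{ik}^2=\tilde w_{ik}^2[1+2h(w_{ik}/\tilde w_{ik},2)]$ dispose of the $\ell_{1,1}$ and Tikhonov pieces respectively, producing bounds involving $h(\cdot,1)$ and $h(\cdot,2)$ with positive coefficients. For the divergence, write $D(\bV\Vert\bW\tilbH)\eqcst\sum_{i,j,p}\nu_{pij}h((\bW\tilbH)_{ij},\zeta_p)$ and partition summands by the sign of $\nu_{pij}$ and by whether $\zeta_p\ge 1$ or $\zeta_p<1$, which (combined with Lemma~\ref{lem:reg_h}) determines whether $\nu_{pij}h(\cdot,\zeta_p)$ is convex or concave. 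On the convex summands one applies Jensen's inequality with the Lee--Seung convex-combination representation $(\bW\tilbH)_{ij}=\sum_k\bigl(\tilde w_{ik}\tilde h_{kj}/(\tilbW\tilbH)_{ij}\bigr)\cdot\bigl((\tilbW\tilbH)_{ij}w_{ik}/\tilde w_{ik}\bigr)$; on the concave summands one uses the tangent-line upper bound at $(\tilbW\tilbH)_{ij}$. Expanding $h(c\sigma,t)=c^th(\sigma,t)+h(c,t)$, the Jensen step contributes terms proportional to $h(w_{ik}/\tilde w_{ik},\zeta_p)$ and the tangent step contributes terms proportional to $h(w_{ik}/\tilde w_{ik},1)$, with coefficients of the form $\nu_{pij}(\tilbW\tilbH)_{ij}^{\zeta_p-1}\tilde h_{kj}\tilde w_{ik}$ that assemble, upon summation over $j$ and $p$, into exactly $s^+_{ik}\tilde w_{ik}$ for the positively-signed contributions and $-s^-_{ik}\tilde w_{ik}$ for the negatively-signed ones.

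The final step fuses the mismatched exponents $\{\zeta_p\}\cup\{1,2\}$ into the single pair $(\vartheta_1,\vartheta_2)$ using monotonicity of $t\mapsto h(\sigma,t)$ on $(0,\infty)$. Choosing $\vartheta_2\ge \max\{2,\max_p\zeta_p\}$ promotes every $h(\cdot,\cdot)$ term carrying a positive coefficient upward to $h(\cdot,\vartheta_2)$, while choosing $\vartheta_1\le \min\{1,\min_p\zeta_p\}$ demotes every $h(\cdot,\cdot)$ term carrying a negative coefficient, the inequality direction being flipped (and hence preserved after multiplication) by the negative multiplier. Taking, for instance, $\vartheta_2=\max\{2,\max_p\zeta_p\}+\epsilon$ and $\vartheta_1=\min\{1,\min_p\zeta_p\}-\epsilon$ for any $\epsilon>0$ secures both $\vartheta_1<\vartheta_2$ and $\vartheta_2>1$ (needed for (P5)). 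The main technical hurdle I anticipate is careful sign book-keeping across the four sign/exponent subcases so that Jensen's inequality and the tangent-line bound each point in the direction consistent with the sign of $\nu_{pij}$ after the final monotonicity promotion; a secondary verification is that equality at $\bW=\tilbW$ is preserved through all successive bounds, which follows because both Jensen's inequality and the tangent-line bound are tight at $\bW=\tilbW$ and because $h(1,t)=0$ for every $t$ nullifies each promoted term at that point.
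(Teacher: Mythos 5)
Your proposal is correct, and its overall architecture matches the paper's proof: split $G_1$ into a divergence part and a regularizer part, handle the $\ell_{1,1}$ and Tikhonov terms via the exact identities in $h(\cdot,1)$ and $h(\cdot,2)$, unify the mismatched exponents through the monotonicity of $t\mapsto h(\sigma,t)$, and verify \ref{prop:first_eq} and \ref{prop:strict_convex} by direct differentiation using $\partial_\sigma h(\sigma,t)=\sigma^{t-1}$ (so that all gradients agree at $\bW=\tilbW$). The one substantive difference is in the divergence part: the paper simply invokes Yang and Oja's result that $\tilG_1(\cdot\vert\cdot)$ with $\vartheta_1=\zeta'_{\min}$, $\vartheta_2=\zeta'_{\max}$ majorizes $(\bW,\bH)\mapsto D(\bV\Vert\bW\bH)$ and then enlarges $\vartheta_2$ via Lemma~\ref{lem:cal_surr}\ref{item:surr_trans}, whereas you re-derive that majorization from scratch (Jensen's inequality with the Lee--Seung weights on the convex summands, tangent-line bounds on the concave summands, then exponent promotion/demotion), which makes the argument self-contained at the cost of the four-case sign bookkeeping you flag; your parameter choice $\vartheta_2\ge\max\{2,\max_p\zeta_p\}$, $\vartheta_1\le\min\{1,\min_p\zeta_p\}$ is a valid, slightly cruder variant of the paper's $\vartheta_2=\max\{\zeta'_{\max},1,2\sgn(\lambda_2)\}$, $\vartheta_1=\zeta'_{\min}$, and suffices since the proposition only asserts existence of such $\vartheta_1<\vartheta_2$. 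Two points to make explicit when writing it up: for \ref{prop:strict_convex}, convexity of the term $-s^-_{ik}\tilw_{ik}\,h(\cdot,\vartheta_1)$ requires $\vartheta_1\le 1$ (guaranteed by your choice of $\vartheta_1$ but omitted from your justification, which cites only $\vartheta_2>1$), and strict convexity established on $\bbR_{++}^{F\times K}$ should be extended to $\bbR_{+}^{F\times K}$ by continuity, as the paper does.
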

\begin{proof}
See Appendix~\ref{app:proof_construct}. 
\end{proof}

\subsection{Derivation of Multiplicative Updates}
Based on Proposition~\ref{proposition:construction}, by setting $\nabla_\bW G_1(\bW\vert\tilbZ)$ to zero, we obtain the multiplicative update for $\bW$. %corresponding to~\eqref{eq:upd_W} %(for $\bW$)
% in Algorithm~\ref{algo:general}. %for any separable $h$-divergence (with elastic-net regularization).\footnote{The multiplicative updates for $\bH$ can be similarly derived by taking transposition and switching the roles of $\bW$ and $\bH$.}
% Proposition 2
\begin{prop}%[Derivation of multiplicative updates]
\label{proposition:deriv_MU}
Let $\bV$, $D(\bV\Vert\cdot)$, $\vartheta_{1}$, $\vartheta_{2}$, $\bS^+$ and $\bS^-$ be given as in Proposition~\ref{proposition:construction}.
For any $t\ge 0$, let $\tilbW$ (resp.\ $\tilbH$) denote the basis (resp.\ coefficient) matrix at iteration  $t$, and $\bW$ (resp.\ $\bH$) denote the basis (resp.\ coefficient) matrix at iteration  $t+1$.
For any $(i,k) \in [F] \times [K]$, the  multiplicative update corresponding to \eqref{eq:upd_W} in Algorithm~\ref{algo:general} admits the form %\footnote{Here }
\begin{equation}
w_{ik} := \tilw_{ik}\left(\frac{s^-_{ik}}{s^+_{ik}+2\lambda_2\tilw_{ik}+\lambda_1}\right)^{{1}/{(\vartheta_{2}-\vartheta_{1})}}.%,\;(i,k)\in[F]\times[K]
\label{eq:MU_W}
\end{equation}
%where $w_{ik}$ denotes the update of the  original iterate $\tilw_{ik}$.
%for any $(i,k)\in[F]\times[K]$.
\end{prop}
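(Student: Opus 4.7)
The plan is to exploit the separability of the surrogate function $G_1(\cdot\vert\tilbZ)$ in~\eqref{eq:first_order_surr_W}: it is a sum of independent scalar contributions indexed by $(i,k)\in[F]\times[K]$, so the constrained minimization in~\eqref{eq:upd_W} decomposes into $FK$ one-dimensional optimizations on $\bbR_+$. For each such $(i,k)$, I would locate the unique critical point of the scalar map $w_{ik}\mapsto G_1(\bW\vert\tilbZ)$ by setting its derivative to zero and solving in closed form.

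The key computational observation is that $h$ admits a unified derivative identity across both regimes of its second argument: $\partial_\sigma h(\sigma,t)=\sigma^{t-1}$ holds both for $t\ne 0$ (where $h(\sigma,t)=(\sigma^t-1)/t$) and for $t=0$ (where $h(\sigma,0)=\log\sigma$ gives $1/\sigma=\sigma^{-1}$). Applying the chain rule to each of the three summands of~\eqref{eq:first_order_surr_W} with inner argument $\sigma=w_{ik}/\tilw_{ik}$, the prefactor $\tilw_{ik}$ (or $\tilw_{ik}^2$) cancels neatly with the $1/\tilw_{ik}$ coming from differentiation, leaving the stationarity equation
\begin{equation*}
(s_{ik}^+ +\lambda_1+2\lambda_2\tilw_{ik})\left(\frac{w_{ik}}{\tilw_{ik}}\right)^{\vartheta_2-1}=s_{ik}^-\left(\frac{w_{ik}}{\tilw_{ik}}\right)^{\vartheta_1-1}.
\end{equation*}
Dividing both sides by $(w_{ik}/\tilw_{ik})^{\vartheta_1-1}$ and taking the $(\vartheta_2-\vartheta_1)$-th root (legitimate since $\vartheta_2-\vartheta_1>0$ by Proposition~\ref{proposition:construction}) yields the claimed update~\eqref{eq:MU_W} directly.

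To conclude that this critical point is the unique global minimizer on $\bbR_+^{F\times K}$, I would invoke property~\ref{prop:strict_convex} of Definition~\ref{def:first_order_surrogate}, which has already been verified for $G_1(\cdot\vert\tilbZ)$ in Proposition~\ref{proposition:construction}. It then remains to check that the formula is well-defined: the base $s_{ik}^-/(s_{ik}^+ +2\lambda_2\tilw_{ik}+\lambda_1)$ is a ratio of nonnegative quantities whose denominator is bounded below by $\lambda_1>0$---which is precisely the role of the positivity assumption emphasized in Remark~\ref{rmk:pos_lambda}---and the exponent $1/(\vartheta_2-\vartheta_1)$ is positive, so the result is nonnegative. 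This derivation is largely routine; the only subtlety worth flagging is to ensure that the unified derivative identity for $h$ handles the boundary case $\vartheta_1=0$ or $\vartheta_2=0$ seamlessly, which matters in practice for KL- and IS-type divergences that involve a logarithmic summand.
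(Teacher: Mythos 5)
Your derivation is correct and is essentially the paper's own route: the gradient $\left[\nabla_\bW G_1(\bW\vert\tilbZ)\right]_{ik}=(s_{ik}^++\lambda_1+2\lambda_2\tilw_{ik})(w_{ik}/\tilw_{ik})^{\vartheta_2-1}-s_{ik}^-(w_{ik}/\tilw_{ik})^{\vartheta_1-1}$ computed in Appendix~\ref{app:proof_construct} is set to zero entrywise, and strict convexity (property~\ref{prop:strict_convex}, established in Proposition~\ref{proposition:construction}) together with the entrywise positivity of $\bS^-$ guarantees this interior critical point is the unique minimizer over $\bbR_+^{F\times K}$. Nothing essential is missing.
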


\begin{remark}[Numerical Stability]\label{rmk:MU}
%We make two remarks about \eqref{eq:MU_W}. First, we notice the multiplicative factor (i.e., the right-hand side of \eqref{eq:MU_W} without $\tilw_{ik}$) is always positive. This means if the initial basis matrix $\bW_0\in\bbR_{++}^{F\times K}$,\footnote{The typical random initialization method used in the literature of NMF initializes each entry of $\bW_0$ \iid from a continuous distribution (with nonnegative real support). Using this method, $\bW^0\in\bbR_{++}^{F\times K}$ almost surely.} then at any iteration $t\in\bbN$, $\bW^t\in\bbR_{++}^{F\times K}$. In other words, the sequence $\{\bW^t\}_{t=1}^\infty$ can only be {\em asymptotically sparse}. However, this poses no problem in practice since for sufficiently large $t$, the sparsity pattern of $\bW^t$ can be recovered simply by thresholding. Second, 
In \eqref{eq:MU_W}, the presence of $\lambda_1 > 0$ ensures numerical stability, i.e., it prevents 
the denominator of the multiplicative factor to be arbitrarily small 
%division by extremely small numbers 
(which may lead to numerical overflow). As a popular heuristic (e.g., \cite{Cichoc_11}), a small positive number is usually added to this denominator artificially. Here we \emph{establish the connection} between this artificially added small number and the $\ell_1$ regularization for {$h$-divergences}, thereby theoretically justifying this heuristic.\footnote{This connection has been  observed for some special $h$-divergences\cite{Hoyer_02,Fev_11}, but here we provide a more general and unified discussion.}
\end{remark}

\begin{remark}[Positivity of $\bW$]
As shown in~\cite{Yang_11c}, both matrices $\bS^+$ and $\bS^-$ are   entry-wise positive, i.e., $\bS^+,\bS^- \in \bR_{++}^{F \times  K}$. Therefore, if $\tilbW \in \bbR_{++}^{F \times  K}$ in~\eqref{eq:MU_W}, then $\bW \in \bbR_{++}^{F \times  K}$. Since the initial basis matrix $\bW^0 \in \bbR_{++}^{F \times  K}$ in Algorithm~\ref{algo:general}, for any {\em finite} index $t \in \bbN$, $\bW^t$ will be entry-wise positive. Similar arguments apply to $\bH^t$. Therefore, %we can always construct a well-defined first-order surrogate function
the positivity requirements on $(\tilbW,\tilbH)$ in Proposition~\ref{proposition:construction} can be satisfied at any finite iteration. Note that this does not prevent {\em any limit point} of $\{(\bW^t,\bH^t)\}_{t=1}^\infty$ from having zero entries. 
%Thus, our guarantees are stronger than those  in \cite{Taka_14} and \cite{Taka_14b} in which the true matrices were required to be entrywise strictly bounded away from zero. See  Section \ref{sec:related}.
\end{remark}
%Next, we consider   nonseparable $h$-divergences. By the convexity (or concavity) of $h(\cdot,t)$, \eqref{eq:general_div} is a difference-of-convex (DC) function. Therefore, by using either a first-order Taylor expansion or Jensen's inequality, the nonseparable case can be easily converted to the separable case. Such standard techniques are well-studied in the literature~\cite{Mairal_15,Yang_11c}.
%\subsection
%\noindent 
\subsection{A Concrete Example} %: Multiplicative Updates for the $\alpha$-Divergences}
As the first-order surrogate function~\eqref{eq:first_order_surr_W} in Proposition~\ref{proposition:construction} and the multiplicative update rule~\eqref{eq:MU_W} in Proposition~\ref{proposition:deriv_MU} may seem abstract, as a concrete example, 
%To better illustrate our general multiplicative updates in \eqref{eq:MU_W}, 
we apply them to the family of $\alpha$-divergences ($\alpha \ne  0$). Details are deferred to Appendix~\ref{app:MU_alpha}.  % as a concrete example.%\footnote{Both cases $\alpha\ne 0$ and $\alpha=0$ will be discussed.} The details are deferred to Sections~\ref{sec:MU_alpha} and \ref{sec:dual_KL}. % in \cite{Zhao_16d}. %The multiplicative updates for other $h$-divergences can be similarly derived. 
\subsection{Extension to the Dual KL Divergence }
When $\alpha = 0$, the corresponding $\alpha$-divergence is called the (generalized) dual  KL divergence. Strictly speaking, it does not belong to the class of $h$-divergences.  
However, equipped with a few more   algebraic manipulations based on several technical definitions, we can also construct a first-order surrogate function and derive a multiplicative update in the form of \eqref{eq:first_order_surr_W} and \eqref{eq:MU_W} respectively. See Appendix~\ref{app:dual_KL} for details. Consequently, the result of our convergence analysis (i.e., Theorem~\ref{thm:main}) also applies to this case.  
%as we will show in Section~\ref{sec:dual_KL}, all the %propositions and theorems
%results in this paper also hold for this case %See Section~\ref{sec:MU_alpha} for its definition. We show in  Section~\ref{sec:dual_KL} that 
%(). % See Section~\ref{sec:dual_KL} for details.%} 

%\begin{remark}
%For the purpose of our convergence analysis (see Section~\ref{sec:conv_analysis}), proving the existence of the first-order surrogate functions will suffice. However, in Section~\ref{sec:construct_first_order} and \ref{sec:deriv_MU}, we  explicitly construct the first-order surrogate functions and derive the multiplicative updates. This can be regarded as a {\em side contribution} of our work---the multiplicative updates can be very useful for practitioners to solve real-world problems, based on suitable $h$-divergences and regularizations. 
%\end{remark}

%\begin{algorithm}[t]
%\caption{Multiplicative Updates for the $\alpha$-Divergences} \label{algo:beta}
%\begin{algorithmic} 
%\State {\bf Input}: basis matrix $\bW_{t-1}$, data sample $\bv_t$, coefficient vector $\bh_t^k$, maximum number of iterations $q$
%\State {\bf Initialize} $\alpha\in(0,0.5),\gamma\in(0,1),i:=0,\xi^0:=1$
%\State {\bf while} $\bard_t(\bh_t^k-\xi^i\nabla\bard_t(\bh_t^k))>\bard_t(\bh_t^k)-\alpha\xi^i\normt{\nabla\bard_t(\bh_t^k)}^2$\\
%\hspace{7cm} {\bf and} $i\le q$
%\begin{align*}
%\xi^{i+1} := \gamma\xi^i,\quad i := i+1
%\end{align*}
%\State {\bf end} %some convergence criterion is met
%\State {\bf Output}: Final step size $\beta_t^k\defeq\xi^i$
%\end{algorithmic}
%\end{algorithm}

\section{Convergence Analysis} \label{sec:conv_analysis}
\subsection{Preliminaries} \label{sec:prelim}
We first define important concepts and quantities that will be used in our convergence analysis in Section~\ref{sec:conv_result}. 
\begin{definition}[Stationary points of constrained optimization problems]\label{def:stat_pt}
Given a finite-dimensional real Euclidean space $\calX$ with inner product $\lrangle{\cdot}{\cdot}$, a differentiable function $g:\calX\to\bbR$  and a set $\calK\subseteq\calX$, $x_0\in\calK$ is a stationary point of the constrained optimization problem $\min_{x\in\calK}g(x)$ if $\lrangle{\nabla g(x_0)}{x-x_0}\ge 0$, for all $x\in\calK$.
\end{definition}
Define 
%\begin{equation}
$\bX \defeq \left[\bW^T\;\bH\right]\in\bbR_{+}^{K\times(F+N)}$
%\end{equation}
and with a slight abuse of notation, we write $\ell(\bX) \defeq \ell(\bW,\bH)$.
Thus by Definition~\ref{def:stat_pt}, we have that $(\barbW,\barbH)$ %\in\bbR_+^{F\times K}\times\bbR_+^{K\times N}$  %$\barbX \defeq [\barbW^T\;\barbH]$ 
is a stationary point of \eqref{eq:new_opt} if and only if $\lrangle{\nabla_\bX \ell(\barbX)}{\bX-\barbX}\ge 0$, for any $\bX\in\bbR_{+}^{K\times(F+N)}$, where $\barbX \defeq [\,\barbW^T\;\barbH\,]$. In particular, this is true if %Therefore a sufficient condition for the stationarity of $(\barbW,\barbH)$ is
\begin{align}
\lrangle{\nabla_\bW \ell(\barbW,\barbH)}{\bW-\barbW} \ge 0, &\;\;\forall\,\bW\in\bbR_+^{F\times K},\label{eq:stat_W}\\
\lrangle{\nabla_\bH \ell(\barbW,\barbH)}{\bH-\barbH} \ge 0, &\;\;\forall\,\bH\in\bbR_+^{K\times N}.\label{eq:stat_H}
\end{align}
\begin{remark}
In some previous works (e.g., \cite{Lin_07c}),   stationary points are defined in terms of KKT conditions, i.e.,%\footnote{Here we use $\nabla_\bW\ell(\barbW,\barbH)$ and $\nabla_\bH\ell(\barbW,\barbH)$ to denote $\nabla_\bW\ell(\bW,\barbH)\big\vert_{\bW=\barbW}$ and $\nabla_\bH\ell(\barbW,\bH)\big\vert_{\bH=\barbH}$ respectively.} %for any $(i,j,k)\in[F]\times[N]\times[K]$,
\begin{align}
%\barw_{ik}\ge 0, \hspace{.5cm}&\hspace{-.5cm}\;\; \barh_{kj}\ge 0,\\
%\frac{\partial}{\partial w_{ik}} \ell(\bW,\barbH)\Big\vert_{\bW=\barbW} \ge 0, \hspace{.5cm}&\hspace{-.5cm}\;\; \frac{\partial}{\partial h_{kj}} \ell(\barbW,\bH)\Big\vert_{\bH=\barbH} \ge 0,\\
%\left(\frac{\partial}{\partial w_{ik}} \ell(\bW,\barbH)\Big\vert_{\bW=\barbW}\right)\barw_{ik} = 0,\hspace{-2.5cm}& \\
%\left(\frac{\partial}{\partial h_{kj}} \ell(\barbW,\bH)\Big\vert_{\bH=\barbH}\right)\barh_{kj}=0.\hspace{-2.5cm}&
\barbW\ge 0, &\; \barbH\ge 0\nn\\
\nabla_\bW\ell(\barbW,\barbH)\ge 0, &\; \nabla_\bH\ell(\barbW,\barbH)\ge 0\nn\\
\barbW \odot \nabla_\bW\ell(\barbW,\barbH)\big\vert_{\bW=\barbW}= 0, &\;\barbH \odot \nabla_\bH\ell(\barbW,\barbH)\big\vert_{\bH=\barbH}= 0. \nn%\label{eq:KKT3}
\end{align}
Since both $\barbW$ and $\barbH$ are nonnegative, it is easy to show these three conditions are equivalent to \eqref{eq:stat_W} and \eqref{eq:stat_H}. In our analysis, we will use \eqref{eq:stat_W} and \eqref{eq:stat_H} for   convenience. 
\end{remark}
\begin{definition}[Convergence of a sequence to a set]\label{def:conv_set}
Given a finite-dimensional real Euclidean space $\calX$ with norm $\norm{\cdot}$, a sequence $\{x_n\}_{n=1}^\infty$ in $\calX$ is said to converge to a set $\calA\subseteq\calX$, denoted as $x_n\to\calA$, if $\lim_{n\to\infty} \inf_{a\in\calA}\norm{x_n-a}= 0$.
\end{definition}
%The proof of the following result can be found in Bertsekas \cite{Bert_99}.

\subsection{Main Result}\label{sec:conv_result} %\vspace{-.05in}
\begin{theorem}\label{thm:main}
For any $\bV\in\bbR_{+}^{F\times N}$, $K\in\bbN$, $\lambda_1,\tlambda_1 > 0$ and $\lambda_2,\tlambda_2 \ge  0$, the sequence of iterates $\{(\bW^t,\bH^t)\}_{t=1}^\infty$ generated by Algorithm~\ref{algo:general} converges to the set of stationary points of \eqref{eq:new_opt}.
\end{theorem}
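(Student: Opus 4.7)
The plan is to adapt the block majorization-minimization framework of~\cite{Raza_13,Hong_16} to the MU algorithm, leveraging the five properties of the first-order surrogates $G_1,G_2$ together with the coercivity induced by the $\ell_{1,1}$ regularizers. First I would establish the monotonicity chain
\begin{align*}
\ell(\bW^t,\bH^t) &= G_1(\bW^t\vert\bW^t,\bH^t)\ge G_1(\bW^{t+1}\vert\bW^t,\bH^t)\\
&\ge \ell(\bW^{t+1},\bH^t)= G_2(\bH^t\vert\bW^{t+1},\bH^t)\\
&\ge G_2(\bH^{t+1}\vert\bW^{t+1},\bH^t)\ge \ell(\bW^{t+1},\bH^{t+1}),
\end{align*}
as a direct consequence of \ref{prop:zero_eq} and \ref{prop:zero_ge} combined with the definition of $\bW^{t+1},\bH^{t+1}$ as exact minimizers of the respective surrogates. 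Since $\ell\ge 0$, the sequence $\{\ell(\bW^t,\bH^t)\}$ converges. Because $\lambda_1,\tlambda_1>0$, the $\ell_{1,1}$ penalties make $(\bW,\bH)\mapsto\ell(\bW,\bH)$ coercive, so the sublevel set $\{(\bW,\bH):\ell(\bW,\bH)\le\ell(\bW^0,\bH^0)\}$ is bounded, and $\{(\bW^t,\bH^t)\}$ therefore admits at least one limit point.

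The heart of the proof is to show that every such limit point $(\barbW,\barbH)$ is stationary in the sense of \eqref{eq:stat_W}-\eqref{eq:stat_H}. Fix a subsequence with $(\bW^{t_n},\bH^{t_n})\to(\barbW,\barbH)$; by boundedness I may pass to a further subsequence so that $\bW^{t_n+1}\to\barbW'$ and $\bH^{t_n+1}\to\barbH'$ for some $\barbW',\barbH'\ge 0$. Telescoping the displayed chain and using convergence of $\{\ell(\bW^t,\bH^t)\}$ forces
\[
G_1(\bW^t\vert\bW^t,\bH^t)-G_1(\bW^{t+1}\vert\bW^t,\bH^t)\to 0,
\]
whence, by joint continuity of $G_1$, $G_1(\barbW\vert\barbW,\barbH)=G_1(\barbW'\vert\barbW,\barbH)$. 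Passing to the limit in the defining inequality $G_1(\bW^{t_n+1}\vert\bW^{t_n},\bH^{t_n})\le G_1(\bW\vert\bW^{t_n},\bH^{t_n})$ for each fixed feasible $\bW$ shows that $\barbW'$ minimizes $G_1(\cdot\vert\barbW,\barbH)$ over $\bbR_+^{F\times K}$. The strict convexity \ref{prop:strict_convex} then gives uniqueness of this minimizer, and combined with the surrogate-value equality above this forces $\barbW=\barbW'$. Hence $\barbW$ itself solves the surrogate subproblem, its first-order optimality condition holds, and the gradient-matching property \ref{prop:first_eq} converts it into $\lrangle{\nabla_\bW\ell(\barbW,\barbH)}{\bW-\barbW}\ge 0$ for all $\bW\in\bbR_+^{F\times K}$. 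An analogous argument for the $\bH$-block (now using $\bW^{t_n+1}\to\barbW$, already established) delivers \eqref{eq:stat_H}.

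The final step is to promote ``every limit point is stationary'' to $(\bW^t,\bH^t)\to\calS$ in the sense of Definition~\ref{def:conv_set}, where $\calS$ denotes the set of stationary points of~\eqref{eq:new_opt}. Closedness of $\calS$ follows from continuity of $\nabla\ell$, and a standard Bolzano-Weierstrass contradiction argument (if $\dist((\bW^{t_k},\bH^{t_k}),\calS)\ge\veps$ along a subsequence, extract a convergent sub-subsequence whose limit lies in $\calS$, contradicting the lower bound) then closes the proof. The step I expect to be hardest is the identification $\barbW'=\barbW$ in the second paragraph: strict convexity \ref{prop:strict_convex} is qualitative rather than quantitative, so I cannot take the shortcut of showing $\|\bW^{t+1}-\bW^t\|\to 0$ directly via strong convexity (which is not assumed); instead, the argument must proceed through the telescoping identity for surrogate values and the joint continuity of $G_1$ in all of its arguments, which is precisely the subtlety that distinguishes the block MM analysis from classical descent-type convergence proofs and requires the full combination of properties \ref{prop:mu}, \ref{prop:first_eq}, and \ref{prop:strict_convex}.
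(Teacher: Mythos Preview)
Your proposal is correct and follows essentially the same block-MM strategy as the paper: coercivity from the $\ell_{1,1}$ terms gives boundedness and hence limit points; monotonicity of $\{\ell(\bW^t,\bH^t)\}$ forces the surrogate decrements to vanish; joint continuity of $G_1,G_2$ plus strict convexity \ref{prop:strict_convex} then pin the limit point as the minimizer of its own surrogate; and \ref{prop:first_eq} converts the resulting first-order optimality into \eqref{eq:stat_W}--\eqref{eq:stat_H}. The only cosmetic difference is bookkeeping: the paper introduces an interleaved ``half-step'' sequence $\bZ^t$ so that consecutive terms differ in only one block (making $\cbW=\cbW'$ automatic and leaving only the $\bH$-identification to argue), whereas you work directly with full iterates $(\bW^{t_n},\bH^{t_n})$ and their successors, which obliges you to run the identification argument twice---once per block---exactly as you outline.
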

\begin{proof}
Since it is known that $x_n\! \to \!\calA$ (cf.\ Definition \ref{def:conv_set}) if and only if every limit point of $\{x_n\}_{n=1}^\infty$ lies in $\calA$,  
%First, by Lemma~\ref{lem:equiv_conv_set}, 
it suffices to show every limit point of  $\{(\bW^t,\bH^t)\}_{t=1}^\infty$ is a stationary point of \eqref{eq:new_opt}.
Since $\lambda_1,\tlambda_1\!>\!0$, $(\bW,\bH)\mapsto\ell(\bW,\bH)$ is jointly coercive~\cite{Bert_99} in $(\bW,\bH)$. In addition, the continuous differentiability of $h(\cdot,t)$ %(see Lemma~\ref{lem:reg_h}) 
implies the joint continuous differentiability of $(\bW,\bH)\mapsto\ell(\bW,\bH)$ in $(\bW,\bH)$. %\textcolor{red}{why? i think we only need continuity for $\calS_0$ to be compact}. 
Hence the sub-level set
\begin{align}
\calS_0 &\defeq \Big\{(\bW,\bH)\in\bbR_+^{F\times K}\!\times\!\bbR_+^{K\times N}\;\big\vert\; \nn\\*
 &\qquad \ell(\bW,\bH)\le \ell(\bW^0,\bH^0)\Big\}
\end{align}
is compact. Since the sequence $\{\ell(\bW^t,\bH^t)\}_{t=1}^\infty$ is nonincreasing, %(and hence convergences to a unique limit), 
$\{(\bW^t,\bH^t)\}_{t=1}^\infty\subseteq\calS_0$. By the compactness of $\calS_0$, $\{(\bW^t,\bH^t)\}_{t=1}^\infty$ has at least one limit point. Pick any such limit point and denote it as $\cbZ\!\defeq\!(\cbW,\cbH)$.
We also define 
\begin{align}
\bZ^t \defeq\left\{\hspace{-.2cm}\begin{array}{ll}
\left(\bW^{t/2},\bH^{t/2}\right), & t \mbox{ even}\\
\left(\bW^{\floor{t/2}+1},\bH^{\floor{t/2}}\right), & t \mbox{ odd}
\end{array}\right.\hspace{-.2cm}, \;\forall\,t\in\bbN. %\mbox{ and }\; \cbZ\defeq \left(\cbW,\cbH\right).\nn
\end{align} 
Note that the subsequence of $\{\bZ^t\}_{t=1}^\infty$ with even indices, i.e., $\{\bZ^{2t'}\}_{t'=1}^\infty$ correspond to the sequence $\{(\bW^t,\bH^t)\}_{t=1}^\infty$.
Hence, there exists a   subsequence $\left\{\bZ^{t_j}\right\}_{j=1}^\infty$ that converges to $\cbZ\in\calS_0$ and $\{t_j\}_{j=1}^\infty$ are all even. % \textcolor{red}{why all even? what if the subsequence's indices are all odd}. 
Moreover, there exists a  subsequence of the sequence $\left\{\bZ^{t_j-1}\right\}_{j=1}^\infty$, denoted as $\left\{\bZ^{t_{j_i}-1}\right\}_{i=1}^\infty$, such that $\bZ^{t_{j_i}-1}$ converges to (possibly) some other limit point $\cbZ'\defeq(\cbW',\cbH')$ as $i\to\infty$.

Next we show $\cbZ=\cbZ'$. By the update rule \eqref{eq:upd_H}, we have
\begin{equation}
\bH^{t_{j_i}/2} \in \argmin_{\bH\in\bbR_{+}^{K\times N}} G_2\left(\bH\vert\bZ^{t_{j_i}-1}\right), \,\forall\,i\in\bbN.
\end{equation}
Thus for any $i\in\bbN$,
\begin{equation}
G_2(\bH^{t_{j_i}/2}\vert\bZ^{t_{j_i}-1}) \le G_2(\bH\vert\bZ^{t_{j_i}-1}), \quad\forall\,\bH\in\bbR_{+}^{K\times N}. \label{eq:G2_le}
\end{equation}
By \ref{prop:zero_ge}, we also have for any $i\in\bbN$,
\begin{equation}
\ell(\bZ^{t_{j_i}/2})\defeq\ell(\bW^{t_{j_i}/2},\bH^{t_{j_i}/2})\le G_2(\bH^{t_{j_i}/2}\vert\bZ^{t_{j_i}-1}).\label{eq:G2_ge}
\end{equation}
Taking $i\to\infty$ on both sides of \eqref{eq:G2_le} and \eqref{eq:G2_ge}, we have
\begin{equation}
\ell(\cbZ)\le G_2(\cbH\vert\cbZ') \le G_2(\bH\vert\cbZ'), \quad\forall\,\bH\in\bbR_{+}^{K\times N}, \label{eq:G2_lege}
\end{equation}
by the joint continuity of $G_2(\cdot\vert\cdot)$ in both arguments in \ref{prop:mu}. 
Thus%\vspace{-.2cm}
\begin{equation}
\cbH \in  \argmin_{\bH\in\bbR_{+}^{K\times N}} G_2(\bH\vert\cbZ'). \label{eq:min_cbH}
\end{equation}
Taking $\bH=\cbH'$ in \eqref{eq:G2_lege}, we have
%\begin{equation}
%\ell(\cbZ)\le G_2(\cbH\vert\cbZ')\le G_2(\bH^{t_i/2-1}\vert\cbZ'),\,\forall\,i\in\bbN.
%\end{equation}
%Again by taking $i\to\infty$, we have
\begin{equation}
\ell(\cbZ)\le G_2(\cbH\vert\cbZ')\le G_2(\cbH'\vert\cbZ')\defeq \ell(\cbZ').
\end{equation}
Since $\{\ell(\bZ^t)\}_{t=1}^\infty$ converges (to a unique limit point), we have $\ell(\cbZ)=\ell(\cbZ')$. This implies  that $\ell(\cbZ)= G_2(\cbH\vert\cbZ')$. Then for any $\bH\in\bbR_{+}^{K\times N}$,
\begin{equation}
\hspace{.0cm}G_2(\cbH'\vert\cbZ')= \ell(\cbZ')=\ell(\cbZ)= G_2(\cbH\vert\cbZ')\le G_2(\bH\vert\cbZ'). \label{eq:G2_ell_chain}%\,\forall\,\bH\ge 0.
\end{equation}
This implies  that %\vspace{-.3cm}
\begin{equation}
%\vspace{-.2cm}
\cbH' \in \argmin_{\bH\in\bbR_{+}^{K\times N}} G_2(\bH\vert\cbZ').\label{eq:min_cbHprime}
\end{equation}
Combining \eqref{eq:min_cbH} and \eqref{eq:min_cbHprime}, by the strictly convexity of $G_2(\cdot\vert \cbZ')$ in \ref{prop:strict_convex}, $\cbH=\cbH'$. By symmetry, we can show $\cbW=\cbW'$, hence $\cbZ=\cbZ'$. Thus \eqref{eq:G2_ell_chain} becomes
\begin{equation}
G_2(\cbH\vert\cbZ) \le G_2(\bH\vert\cbZ), \,\forall\,\bH\in\bbR_{+}^{K\times N}. \label{eq:min_G2_cbH}
\end{equation}

Now, the convexity of $G_2(\cdot\vert\cbZ)$ implies that
\begin{equation}
\lrangle{\nabla_\bH G_2(\cbH\vert\cbZ)}{\bH-\cbH} \ge 0, \,\forall\,\bH\in\bbR_{+}^{K\times N}.
\end{equation}
From the first-order property of $G_2(\cdot\vert\cbZ)$ in \ref{prop:first_eq}, we have%\vspace{-.2cm} % implies
\begin{equation}
%\vspace{-.1cm}
\lrangle{\nabla_\bH\ell(\cbW,\cbH)}{\bH-\cbH} \ge 0, \,\forall\,\bH\in\bbR_{+}^{K\times N}. \label{eq:variat_H}
\end{equation}
Similarly, we also have%\vspace{-.2cm}
\begin{equation}
%\vspace{-.1cm}
\lrangle{\nabla_\bW\ell(\cbW,\cbH)}{\bW-\cbW} \ge 0, \,\forall\,\bW\in\bbR_{+}^{F\times K}. \label{eq:variat_W}
\end{equation}
The variational inequalities \eqref{eq:variat_H} and \eqref{eq:variat_W} together show that $(\cbW,\cbH)$ is a stationary point of \eqref{eq:new_opt}. 
\end{proof}
%\vspace{-.1cm}
\begin{remark}
%The intuitions of our proof are explained as follows. 
We now provide some intuitions of the proof.  We first use the positivity of $\lambda_1$ and $\tlambda_1$  to assert that $(\bW,\bH)\mapsto \ell(\bW,\bH)$ is coercive and hence that $\calS_0$ is compact. This allows us to extract convergent subsequences.
The most crucial step   \eqref{eq:min_G2_cbH}  states that at an arbitrary limit point of $\{\bZ^t\}_{t=1}^\infty$, denoted as $\cbZ=(\cbW,\cbH)$, $\cbH$ serves as a minimizer of $G_2(\cdot\vert\cbZ)$ over $\bbR_+^{K\times N}$. By symmetry, $\cbW$  also serves as a minimizer of $G_1(\cdot\vert\cbZ)$ over $\bbR_+^{F\times K}$. In the single-block case, this idea is fairly intuitive. However, to prove \eqref{eq:min_G2_cbH} in the double-block case, we consider {\em two} subsequences $\{\bZ^{t_{j_i}}\}_{i=1}^\infty$ and $\{\bZ^{t_{j_i}-1}\}_{i=1}^\infty$. In each sequence, only $\bW$ or $\bH$ is updated. 
%which correspond to the sequence of updating $\bH$ and updating $\bW$ respectively. 
Then we show these two sequences converge to the same limit point. This implies the  Gauss-Seidel minimization  procedure~\cite[Section~7.3]{Burden_16}  %\textcolor{red}{do you have my Bertsekas book? find the section that Gauss-Seidel is mentioned and cite it more precisely here.} 
in the double-block case is essentially the same as the minimization in the single-block case. The claim then  follows. % immediately. 
%In our proof, without loss of generality, we take $(\cbW,\cbH)$
% To prove \eqref{eq:min_G2_cbH}, we 
\end{remark}

\section{Conclusion and Future Work}
In this work, we present a unified MU algorithm for (weighted) $h$-divergences with $\ell_1$ and Tikhonov regularization and analyze its convergence (to stationary points). 

%\textcolor{red}
{In the future, we plan to investigate the further properties  of the MU algorithm. Specifically, we would like to understand whether it is able to converge to {\em second-order} stationary points~\cite{Nest_06}. This question is motivated by  some recent works} which have shown that for low-rank matrix factorization problems~\cite{Sun_16}, under mild conditions, all the second-order stationary points are local minima. For these problems, the local minima have  been shown to possess strong theoretical properties~\cite{Ge_16}. Therefore, investigation into  convergence to the second-order stationary points of the MU algorithm  is meaningful. 

%\subsection*{Acknowledgements}
{\em Acknowledgements}: The authors would like to thank Prof.\ Zhirong Yang for many useful comments on the manuscript.

%\section{Appendices}
\appendices
%\subsection{Proof of Proposition~\ref{propsition:construction}}\label{sec:proof_prop1}
\setcounter{lemma}{0}
\renewcommand{\thelemma}{T-\arabic{lemma}}

\section{Proof of Proposition~\ref{proposition:construction}}\label{app:proof_construct}
We first show that $G_1(\cdot\vert\cdot)$ is a surrogate function of $(\bW,\bH) \mapsto \ell(\bW,\bH)$ for $\bW$ by %verifying properties 
 decomposing $G_1(\cdot\vert\cdot)$ into two functions $\tilG_1(\cdot\vert\cdot)$ and $\barG_1(\cdot\vert\cdot)$, where  
\begin{align}
&\tilG_1(\bW\vert\tilbZ) \defeq \sum_{i=1}^F\sum_{k=1}^K \left[s_{ik}^+\tilw_{ik}\, h\left(\frac{w_{ik}}{\tilw_{ik}},\vartheta_{2}\right)\right.\nn\label{eq:surr_sep}\\
&\hspace{4cm}\left.-s_{ik}^-\tilw_{ik}\, h\left(\frac{w_{ik}}{\tilw_{ik}},\vartheta_{1}\right)\right],\\
&\barG_1(\bW\vert\tilbZ) \defeq \sum_{i=1}^F\sum_{k=1}^K (\lambda_1\tilw_{ik} + 2\lambda_2\tilw_{ik}^2)  h\left(\frac{w_{ik}}{\tilw_{ik}},\vartheta_{2}\right).
\end{align}
%Next we 
Define constants $\{\zeta'_p\}_{p=1}^P$ such that $\zeta'_p\defeq 1$ if $\zeta_p \in (0,1)$ and $\zeta'_p\defeq \zeta_p$ otherwise, for any $p \in [P]$. Accordingly, define 
\begin{equation}
\zeta'_{\min} \defeq \min\{\zeta'_p\}_{p=1}^P \quad \mbox{and}\quad \zeta'_{\max} \defeq \max\{\zeta'_p\}_{p=1}^P. \label{eq:zeta_max_min_prime}
\end{equation} When $\vartheta_{1} = \zeta'_{\min}$ and $\vartheta_{2} = \zeta'_{\max}$, Yang and Oja~\cite{Yang_11c} showed that  $\tilG_1(\cdot\vert\cdot)$ is a surrogate function of $(\bW,\bH) \mapsto D(\bV\Vert\bW\bH)$ for $\bW$. By Lemmas \ref{lem:reg_h} and~\ref{lem:cal_surr}\ref{item:surr_trans}, $\tilG_1(\cdot\vert\cdot)$ is a surrogate function for any $\vartheta_2\ge \zeta'_{\max}$. % (up to an additive constant that is independent of $\bW$).
Define a new function %$\barG'_1(\cdot\vert\cdot)$ such that 
\begin{equation}
\barG'_1(\bW\vert\tilbZ)\defeq\barG_1(\bW\vert\tilbZ) + \sum_{i=1}^2 \lambda_i \phi_i(\tilbW)  +  \sum_{j=1}^2\tlambda_j \phi_j(\tilbH),
\end{equation} 
so that $\bW \mapsto \barG'_1(\bW\vert\tilbZ)$ and $\bW \mapsto \barG'_1(\bW\vert\tilbZ)$ differs by a constant that is independent of $\bW$. 
By Lemma~\ref{lem:cal_surr}\ref{item:surr_sum}, to show $G_1(\cdot\vert\cdot)$ is %a first-order surrogate functon of 
is a first-order surrogate function of $(\bW,\bH) \mapsto \ell(\bW,\bH)$ for $\bW$, it suffices to show $\barG'_1(\cdot\vert\cdot)$ is a surrogate function of $(\bW,\bH) \mapsto \sum_{i=1}^2 \lambda_i \phi_i(\bW)  +  \sum_{j=1}^2\tlambda_j \phi_j(\bH)$ for $\bW$, with $\vartheta_2\ge \zeta'_{\max}$.   
%To show that it satisfies~\ref{prop:zero_eq} and~\ref{prop:zero_ge}, we have
%In addition, 
First, note that 
\begin{align}%\flushleft
&\hspace{-.4cm}\quad\lambda_1\norm{\bW}_{1,1} + \lambda_2\norm{\bW}_\rmF^2\nn\\
&\hspace{-.4cm}= \sum_{i=1}^F\sum_{k=1}^K \lambda_1 \tilw_{ik} \left[\frac{w_{ik}}{\tilw_{ik}} - 1\right] + 2\lambda_2\tilw_{ik}^2\left[\frac{1}{2}\left(\frac{w_{ik}}{\tilw_{ik}}\right)^2 - \frac{1}{2}\right]\nn\\
& \hspace{4cm} +\lambda_1\normt{\tilbW}_{1,1} + \lambda_2\normt{\tilbW}_\rmF^2\\
&\hspace{-.4cm}\le \sum_{i=1}^F\sum_{k=1}^K (\lambda_1\tilw_{ik}  +  2\lambda_2\tilw_{ik}^2)  h\left(\frac{w_{ik}}{\tilw_{ik}},\max\{\zeta'_{\max},1,2\sgn(\lambda_2)\} \right)\nn\\
&\hspace{4cm}+\lambda_1\normt{\tilbW}_{1,1} + \lambda_2\normt{\tilbW}_\rmF^2,\label{eq:upperbound_12}
\end{align}
where $\sgn(\lambda_2)$ equals $0$ if $\lambda_2 = 0$ and equals $1$ if $\lambda_2 > 0$, and in~\eqref{eq:upperbound_12}  we use the monotonicity of $h(\sigma,\cdot)$ for any $\sigma > 0$ in Lemma~\ref{lem:reg_h}. 
Since $h(1,t) = 0$ for any $t > 0$, by choosing $\vartheta_2 = \max\{\zeta'_{\max},1,2\sgn(\lambda_2)\}$,
% and using Lemma~\ref{lem:cal_surr}\ref{item:surr_trans}, 
 we see that
$\barG'_1(\cdot\vert\cdot)$ satisfies properties~\ref{prop:zero_eq} and~\ref{prop:zero_ge}. 
In addition, by~\eqref{eq:deriv_h} in Lemma~\ref{lem:reg_h}, $\barG'_1(\cdot\vert\cdot)$ obviously satisfies~\ref{prop:mu}. 
%$\zeta_{\max}\defeq\max\{\zeta'_{\max},1,2\sgn(\lambda_2)\}$, $\zeta_{\min}\defeq\zeta'_{\min}$ and 
%in \eqref{eq:upperbound_12} we make use of the monotonicity of $h(\nu,\cdot)$ in Lemma~\ref{lem:reg_h}.

To prove $G_1(\cdot\vert\cdot)$ is a first-order surrogate function of $(\bW,\bH) \mapsto \ell(\bW,\bH)$ for $\bW$, we show it also satisfies properties~\ref{prop:first_eq} and~\ref{prop:strict_convex}. 
  %$G(\bW\vert\tilbW,\tilbH)$ is a surrogate function, first notice that by the differentiability of $h(\cdot,t)$ in Lemma~\ref{lem:reg_h}, it obviously satisfies \ref{prop:mu}. Thus by Lemma~\ref{lem:cal_surr}, it suffices to show 
%\begin{align}
%&\quad\lambda_1\norm{\bW}_{1,1} + \lambda_2\norm{\bW}_F^2\\
%&\eqcst \sum_{i=1}^F\sum_{k=1}^K \lambda_1 \tilw_{ik} \left[\frac{w_{ik}}{\tilw_{ik}}\right] + 2\lambda_2\tilw_{ik}^2\left[\frac{1}{2}\left(\frac{w_{ik}}{\tilw_{ik}}\right)^2\right]\\
%&\lecst \sum_{i=1}^F\sum_{k=1}^K (\lambda_1\tilw_{ik} + 2\lambda_2\tilw_{ik}^2)  h\left(\frac{w_{ik}}{\tilw_{ik}},\zeta_{\max}\right),\label{eq:upperbound_12}
%\end{align}
%where $\zeta_{\max}\defeq\max\{\zeta'_{\max},1,2\sgn(\lambda_2)\}$, $\zeta_{\min}\defeq\zeta'_{\min}$ and 
%in \eqref{eq:upperbound_12} we make use of the monotonicity of $h(\nu,\cdot)$ in Lemma~\ref{lem:reg_h}.
First, for any $i \in [F]$ and $k \in [K]$, %we have 
\begin{align}
& \left[\nabla_\bW G_1(\bW\vert\tilbZ)\right]_{ik}  \nn\\*
&=  (s_{ik}^++\lambda_1+2\lambda_2\tilw_{ik})\left(\frac{w_{ik}}{\tilw_{ik}}\right)^{\vartheta_2-1}-s_{ik}^-\left(\frac{w_{ik}}{\tilw_{ik}}\right)^{\vartheta_1-1}.\nn%\; \forall\,(i,k)\in[F]\times[K].\label{eq:grad_surr}
\end{align}
Therefore, 
%From \eqref{eq:grad_surr}, we have 
%\begin{equation}
$\nabla_\bW G_1(\bW\vert\tilbZ)\big\vert_{\bW=\widetilde{\bW}} = \nabla_\bW \ell(\bW,\tilbH)\big\vert_{\bW=\widetilde{\bW}}$.
%\end{equation}
%Finally, to prove \ref{prop:strict_convex}, it suffices to show for any $\bW\in\bbR_{++}^{F\times K}$,
%\begin{equation}
%\frac{\partial^2}{\partial w_{ik}^2}G(\bW\vert\tilbW,\tilbH)>0,\,\forall\,(i,k)\in[F]\times[K].
%\end{equation}
%Next, by the definition of $\{\zeta'_p\}_{p=1}^P$, we have $\zeta'_{\min}\le 1\le\zeta'_{\max}$
This shows~\ref{prop:first_eq}. Next, since $\vartheta_1 \le 1 \le\vartheta_2$, $\vartheta_2-\vartheta_1 > 0$ and $\bS^+,\bS^- \in \bbR_{+}^{F\times K}$, %we have that 
for any $i \in [F]$, $k \in [K]$ and $\bW \in \bbR_{++}^{F\times K}$,
\begin{align}
&\frac{\partial^2}{\partial w_{ik}^2}G_1(\bW\vert\tilbZ) = (1-\vartheta_1)s^-_{ik}\left(\frac{w_{ik}}{\tilw_{ik}}\right)^{\vartheta_1-2}\nn\\
&\quad\quad+\left(\frac{s^+_{ik}+\lambda_1}{\tilw_{ik}}+2\lambda_2\right)\left(\vartheta_2-1\right)\left(\frac{w_{ik}}{\tilw_{ik}}\right)^{\vartheta_2-2}>0.\nn
\end{align}
This implies $\bW \mapsto G_1(\bW\vert\tilbZ)$ is strictly convex on $\bbR_{++}^{F\times K}$. Since $\bW \mapsto G_1(\bW\vert\tilbZ)$ is continuous and convex on $\bbR_{+}^{F\times K}$, it is strictly convex on $\bbR_{+}^{F\times K}$. This proves \ref{prop:strict_convex}.

\section{First-order Surrogate Functions and Multiplicative Updates for the $\alpha$-Divergences $(\alpha\ne 0)$}\label{app:MU_alpha}
%For ease of reference, we first define the $\alpha$-divergences.
\begin{definition}[$\alpha$-divergences]\label{def:alpha_div}
Given any matrix $\bV\in\bbR_{++}^{F\times N}$, the $\alpha$-divergences $D^{\rm alp}_\alpha(\bV\Vert\cdot) : \bbR_{++}^{F\times N} \to \bbR$, % (parameterized by $\alpha  \in \bbR$), 
is defined as $D^{\rm alp}_\alpha(\bV\Vert\hatbV) \defeq  \sum_{i=1}^F\sum_{j=1}^N d^{\rm alp}_\alpha(v_{ij}\Vert \hatv_{ij})$, where for $v,\hatv > 0$, %positive $v$ and $\hatv$,
\[
d^{\rm alp}_\alpha(v\Vert\hatv)  \defeq  \left\{\hspace{-.2cm}\begin{array}{ll}\dfrac{\left(\hatv\left[\left(v/\hatv\right)^\alpha - 1\right] - \alpha(v-\hatv)\right)}{\alpha(\alpha-1)}, &\hspace{-.2cm}\alpha \in  \bbR \setminus \{0,1\}\\
v\log(v/\hatv)-v+\hatv, &\hspace{-.2cm} \alpha= 1\\
\hatv\log(\hatv/v)-\hatv+v, &\hspace{-.2cm} \alpha= 0\\
\end{array}\hspace{-.0cm}.\right.
\]
\end{definition}
%In this section, we focus on the case $\alpha\ne 0$. 
To construct the first-order surrogate function $G^{{\rm alp},\alpha}_1(\cdot\vert\cdot)$ of $(\bW,\bH) \mapsto \ell(\bW,\bH)$ in~\eqref{eq:new_obj} for $\bW$, when $D(\bV\Vert\cdot)$ belongs to the family of $\alpha$-divergences ($\alpha \ne 0$), first recall the definitions of $\bS^+$ and $\bS^-$ in Proposition~\ref{proposition:construction}. 
From the definition of the $\alpha$-divergences (in Definition~\ref{def:alpha_div}), given any $\tilbW \in \bbR_+^{F \times K}$ and $\tilbH \in \bbR_+^{K \times N}$, and for any $(i,k)\in[F]\times[K]$, we have %when $\alpha\ne 0$, %of the $\alpha$
\begin{align}
s^+_{ik} = \frac{1}{\alpha}\sum_{j=1}^N\tilh_{kj},\;\;\mbox{and}\;\;
s^-_{ik} = \frac{1}{\alpha}\sum_{j=1}^Nq_{ij}^\alpha \tilh_{kj}, \;\;\alpha\ne 0,
%\left\{\hspace{-.2cm}\begin{array}{ll}
%\frac{1}{\alpha}\sum_{j=1}^Nq_{ij}^\alpha \tilh_{kj},&\alpha\ne 0\vspace{.2cm}\\
%\frac{1}{\alpha}\sum_{j=1}^N\log(q_{ij}) \tilh_{kj},&\alpha=0
%\end{array}\right.\nn,
\end{align}
where for any $(i,j) \in [F] \times [N]$, $q_{ij} \defeq {v_{ij}}/{(\tilbW\tilbH)_{ij}}$. In addition, from Definition~\ref{def:alpha_div}, we can also observe the values of $\zeta_1$ and $\zeta_2$ (see Definition~\ref{def:general_div}), hence deduce from~\eqref{eq:zeta_max_min_prime} that  
\begin{align}
(\zeta'_{\max},\zeta'_{\min}) = \left\{\hspace{-.2cm}\begin{array}{ll}
%(1,0), & \alpha=1\\
(1,1-\alpha), & \alpha>0\\ %\in\bbR_{++}\\%\setminus\{1\}\\
%(1+\eta,1), & \alpha=0\\
(1-\alpha,1), & \alpha <0
\end{array}\right..\label{eq:zeta_max_min}
\end{align}
By choosing $\vartheta_{1} = \zeta'_{\min}$ and $\vartheta_2 = \max\{\zeta'_{\max},1,2\sgn(\lambda_2)\}$ as in Appendix~\ref{app:proof_construct}, we can obtain $G_1^{{\rm alp},\alpha}(\cdot\vert\cdot)$ per~\eqref{eq:first_order_surr_W} in Proposition~\ref{proposition:construction}. 
%In \cite{Cichoc_08,Yang_11c}, the surrogate function (up to some additive constants) of $(\bW,\bH)\mapsto D(\bW\Vert\bH)$ for $\bW$ at $(\tilbW,\tilbH)$ with the $\alpha$-divergences, $\barG_\alpha(\bW\vert\tilbW,\tilbH)$ is given by\vspace{.2cm}\\
%\begin{subequations}
%$\barG_\alpha(\bW\vert\tilbW,\tilbH) =$ \\[-.1cm]
%\begin{empheq}[left={\empheqlbrace}]{align}
%&\sum_{ik}\frac{1}{\alpha(1-\alpha)}\sum_{j} (1-\alpha)q_{ij}^{-1}({w_{ik}}/{\tilw_{ik}})\nn\\[-.1cm]
%&\hspace{1cm} - q_{ij}^{\alpha-1}\left({w_{ik}}/{\tilw_{ik}}\right)^{1-\alpha},\;\alpha\in\bbR\setminus\{0,1\}\vspace{.1cm},\label{eq:G_alpha_ne0}\\
%&\sum_{ik}\sum_{j}q_{ij}^{-1}({w_{ik}}/{\tilw_{ik}})-\nn\\[-.3cm]
%&\hspace{1.4cm}\log\left(q_{ij}^{-1}({w_{ik}}/{\tilw_{ik}})\right),\;\alpha=1.
%\end{empheq}
%\end{subequations}
%Thus %by \eqref{eq:zeta_max_prime}, we have
%Therefore, $\zeta_{\max}$ and $\zeta_{\min}$ can be obtained by their definitions. 
Additionally, 
the multiplicative update in \eqref{eq:MU_W} in the case of $\alpha$-divergences becomes
\begin{align*}\hspace{-.1cm}
\begin{array}{ll}
w_{ik} := \tilw_{ik}\left(\dfrac{\sum_{j=1}^Nq_{ij}^\alpha \tilh_{kj}}{\sum_{j=1}^N\tilh_{kj}+2\alpha\lambda_2\tilw_{ik}+\alpha\lambda_1}\right)^{1/\phi(\alpha)}  ,&   \alpha\ne 0
%w_{ik} := \tilw_{ik}\left(\dfrac{\sum_{j=1}^N\log(q_{ij}) \tilh_{kj}}{\sum_{j=1}^N\tilh_{kj}+2\alpha\lambda_2\tilw_{ik}+\alpha\lambda_1}\right)^{1/\eta},&   \alpha= 0
\end{array}  ,
\end{align*}
where 
\begin{align}
\phi(\alpha) = \left\{\hspace{-.2cm}
\begin{array}{ll}
\alpha+\sgn(\lambda_2), & \alpha>0\\
1, & \alpha\in(-1,0)\\
-\alpha,&\alpha<-1
\end{array}
\right..
\end{align}

\section{First-order Surrogate Functions and Multiplicative Updates for the Dual KL Divergence} \label{app:dual_KL}
In~\cite{Cichoc_10}, a surrogate function of $(\bW,\bH) \mapsto D(\bV\Vert\bW\bH)$  for $\bW$, when $D(\bV\Vert\cdot)$ is the dual KL divergence, is given by %denoted as $G^{\rm alp}_0()$ %by taking $\alpha\to 0$ and using L'H\^opital's rule, we have
\begin{align}
&\tilG^{{\rm alp},0}_1(\bW\vert\tilbZ) \defeq \sum_{ik} w_{ik}\log\left(\frac{w_{ik}}{\tilw_{ik}}\right)\sum_{j} h_{kj}\nn \\
&\hspace{3.5cm}- w_{ik} \sum_j\left(\log q_{ij}+1\right)h_{kj}.
\end{align}
However, this surrogate function {cannot be expressed as~\eqref{eq:surr_sep} with an appropriate choice of parameters}, hence it cannot be directly used to derive the first-order surrogate function for~\eqref{eq:new_obj}. Therefore {consider a majorant for $\tilG^{{\rm alp},0}_1(\bW\vert\tilbZ)$ as follows}
\begin{align}
&\hspace{-.3cm}{\tilG^{{\rm alp},0'}_1}(\bW\vert\tilbZ) \defeq\frac{1}{\eta}\sum_{ik}\left({w_{ik}}/{\tilw_{ik}}\right)^{1+\eta}\tilw_{ik}\sum_{j}q_{ij}^{-\eta}h_{kj} \nn\\
&\hspace{2.7cm} - (1+\eta)(w_{ik}/\tilw_{ik})\tilw_{ik}\sum_{j}h_{kj}, \label{eq:barG}
\end{align}
where $\eta$ is any positive real number. 
By Lemma~\ref{lem:cal_surr}\ref{item:surr_trans}, $\tilG^{{\rm alp},0}_1(\cdot\Vert\cdot)$ is also a surrogate function of $(\bW,\bH) \mapsto D(\bV\Vert\bW\bH)$  for $\bW$. 
By comparing \eqref{eq:surr_sep} {to} \eqref{eq:barG}, we have % observe that %$s^+_{ik}$,  $s^-_{ik}$, $\vartheta_1$ and $\vartheta_2$ in~\eqref{eq:surr_sep} have the following values: 
%The correspondences of $s^+_{ik}$,  $s^-_{ik}$, $\vartheta_1$ and $\vartheta_2$ in \eqref{eq:surr_sep} to those in \eqref{eq:barG} can be easily established:
%\footnote{Note that in this case $s^+_{ik}$ and $s^-_{ik}$ may not represent the sums of positive and unsigned negative terms in $\nabla_\bW D(\bV\Vert\bW\tilbH)\big\vert_{\bW=\widetilde{\bW}}$. However, as long as they lie in $\bbR_{+}^{F\times K}$, all the propositions and theorems in this paper hold.}
\begin{align}
&\vartheta_1 = 1,\;\vartheta_2 \ge 1+\eta,\nn\\
&s^+_{ik} = \frac{1}{\eta}\sum_j q_{ij}^{-\eta}h_{kj}, \;s^-_{ik} = \frac{1+\eta}{\eta}\sum_j h_{kj}.\nn
\end{align}
 Note that in this case $\bS^+$ and $\bS^-$ may not represent the sums of positive and unsigned negative terms in $\nabla_\bW D(\bV\Vert\bW\tilbH)\big\vert_{\bW=\widetilde{\bW}}$. However, as long as they lie in $\bbR_{+}^{F\times K}$, Propositions~\ref{proposition:construction} and~\ref{proposition:deriv_MU} still hold.   
Therefore, by choosing $\vartheta_2 = \max\{1+\eta,2\sgn(\lambda_2)\}$, the first-order surrogate function for the dual KL-divergence can be constructed per Proposition~\ref{proposition:construction}. Additionally, per Proposition~\ref{proposition:deriv_MU}, the multiplicative update is
\begin{equation}
w_{ik} \triangleq w_{ik} \left(\frac{(1+\eta)\sum_j h_{kj}}{\sum_j q_{ij}^{-\eta}h_{kj}+2\eta\lambda_2\tilw_{ik}+\eta\lambda_1}\right)^{1/\psi(\eta)},
\end{equation}
where
\begin{equation}
\psi(\eta) \triangleq \max\{\eta,2\sgn(\lambda_2)-1\}.
\end{equation}
%We remark that in practice, to avoid over-relaxation \textcolor{red}{no idea what's over-relaxation} issues, $\eta$ can be chosen to be small. %This ensures that  $\barG'_0(\cdot\vert\tilbW,\tilbH)$  is a tighter majorant function of $\barG_0(\cdot\vert\tilbW,\tilbH)$.

\section{Technical Lemmas}\label{app:tech_lemma}
Among the following technical lemmas, Lemma~\ref{lem:reg_h} is adapted from {\cite[Lemma~1]{Tan_13a}}, whereas Lemma~\ref{lem:cal_surr}  can be simply proved by definition. 
%We first present two lemmas that will be used in our proof. 
\begin{lemma}[Regularity of $h$ in \eqref{eq:def_h}]\label{lem:reg_h}
For any $t\in\bbR$, denote the natural domain of $h(\cdot,t)$ as $\Xi_t$. Then $h(\cdot,t)$ is continuously differentiable on $\inter(\Xi_t)$, i.e., the interior of $\Xi_t$.  In particular,
\begin{equation}
\frac{\partial}{\partial\sigma} h(\sigma,t) = \sigma^{t-1}, \,\forall\,t\in\bbR,\,\forall\,\sigma\in\inter(\Xi_t).\label{eq:deriv_h}
\end{equation}
In addition, $h(\cdot,t)$ is either convex or concave  on $\inter(\Xi_t)$, for any $t \in \bbR$.
Finally, for every $\nu > 0$, the function $h(\nu,\cdot)$ is nondecreasing on $\bbR$. %In fact, $h(\nu,\cdot)$ is strictly increasing on $\bbR$ unless $\nu=1$. 
\end{lemma}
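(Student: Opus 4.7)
The plan is to verify each of the four assertions by direct computation on the two pieces of the definition of $h$, and then to treat the monotonicity-in-$t$ claim by a short convexity argument applied to an auxiliary exponential.

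First, for the differentiability and derivative formula, I would argue piecewise. When $t \ne 0$, the map $\sigma \mapsto (\sigma^t - 1)/t$ is $C^\infty$ on $(0,\infty)$ with derivative $\sigma^{t-1}$, and for $t > 0$ the right derivative at $\sigma = 0$ is $0 = 0^{t-1}$ when $t > 1$ (and $+\infty$ otherwise), which does not concern us since we only assert continuous differentiability on the \emph{interior} $\inter(\Xi_t)$. When $t = 0$, $\sigma \mapsto \log \sigma$ is $C^\infty$ on $(0,\infty)$ with derivative $\sigma^{-1} = \sigma^{t-1}$, giving the same uniform formula. Continuity of $\sigma \mapsto \sigma^{t-1}$ on $(0,\infty)$ then yields continuous differentiability.

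For the convexity/concavity dichotomy, I would compute the second derivative $\partial^2 h(\sigma,t)/\partial \sigma^2 = (t-1)\sigma^{t-2}$ by differentiating the formula~\eqref{eq:deriv_h} once more, valid for all $t \in \bbR$ and $\sigma \in \inter(\Xi_t) \subseteq (0,\infty)$. Since $\sigma^{t-2} > 0$ on $(0,\infty)$, the sign of the second derivative is the sign of $t-1$; hence $h(\cdot,t)$ is convex when $t \ge 1$ and concave when $t \le 1$ (with the affine boundary case $t = 1$ trivially both). This handles the third claim.

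Finally, for the monotonicity of $t \mapsto h(\nu,t)$ at fixed $\nu > 0$, I would change variables by setting $a \defeq \log \nu$ and $g(t) \defeq h(\nu,t)$, so that $g(t) = (e^{at} - 1)/t$ for $t \ne 0$ and $g(0) = a$. A routine application of L'H\^{o}pital's rule shows $g$ is continuous at $0$, so it suffices to show $g'(t) \ge 0$ for $t \ne 0$. Direct differentiation gives
\begin{equation*}
g'(t) = \frac{at\, e^{at} - (e^{at} - 1)}{t^2} = \frac{\phi(at)}{t^2}, \qquad \phi(s) \defeq e^s(s-1) + 1.
\end{equation*}
The key observation is that $\phi'(s) = s e^s$, whence $\phi$ decreases on $(-\infty,0)$, increases on $(0,\infty)$, and attains its global minimum $\phi(0) = 0$ at the origin. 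Therefore $\phi \ge 0$ on $\bbR$, which gives $g'(t) \ge 0$ for all $t \ne 0$ and completes the argument.

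I do not anticipate any significant obstacle: all four claims reduce to one-variable calculus once the definition is split into its two pieces. The only mildly delicate point is the continuity/monotonicity match at $t = 0$ in the last step, which is handled cleanly by noting that $g$ is continuous there and that $g' \ge 0$ on $\bbR \setminus \{0\}$ suffices for monotonicity on all of $\bbR$.
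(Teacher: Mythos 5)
Your proof is correct: all four claims check out, the derivative and second-derivative computations are right, and the monotonicity argument via $\phi(s) = e^s(s-1)+1$ with $\phi'(s) = s e^s$ and global minimum $\phi(0)=0$ is a clean and complete way to handle the only nontrivial assertion. For comparison, the paper does not prove this lemma at all --- it simply states that it is ``adapted from [Tan\_13a, Lemma~1]'' --- so your self-contained one-variable-calculus argument supplies a proof where the paper only offers a citation; the structure (piecewise differentiation for \eqref{eq:deriv_h}, sign of $(t-1)\sigma^{t-2}$ for the convexity dichotomy, and the substitution $a = \log\nu$ with continuity at $t=0$ for the monotonicity in $t$) is exactly what one would expect the cited proof to contain.
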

\begin{lemma}[Calculus of Surrogate Functions]\label{lem:cal_surr}
Let $f:\calX\to\bbR$ be given as in Definition~\ref{def:first_order_surrogate}. Let $i$ be any index in $[n]$. %Then for any $i\in[n]$,
\begin{enumerate}[leftmargin=0.5cm,label=(\alph*)]
\item %For any $i\in[n]$, 
If $f = \barf + \tilf$ and $\barF_i(\cdot\vert\cdot)$ and $\tilF_i(\cdot\vert\cdot)$ are surrogate functions of $(x_1,\ldots,x_n) \mapsto \barf(x_1,\ldots,x_n)$ and $(x_1,\ldots,x_n) \mapsto \barf(x_1,\ldots,x_n)$ for the $x_i$ respectively, then $F_i(\cdot\vert\cdot) \defeq \barF_i(\cdot\vert\cdot)+\tilF_i(\cdot\vert\cdot)$ is a surrogate function of $(x_1,\ldots,x_n) \mapsto f(x_1,\ldots,x_n)$ for $x_i$.\label{item:surr_sum}
\item %For any $i\in[n]$, 
If $F_i(\cdot\vert\cdot)$ is a surrogate function of $(x_1,\ldots,x_n) \mapsto f(x_1,\ldots,x_n)$ for $x_i$, and there exists $\tilF_i:\calX_i\times\calX\to\bbR$ that satisfies \ref{prop:mu} and 
\begin{align}
\tilF_i(\tilx_i\vert\tilx) &= F_i(\tilx_i\vert\tilx), \,\forall\,\tilx\in\calX,\\
\tilF_i(x_i\vert\tilx) &\ge F_i(x_i\vert\tilx), \,\forall\,(x_i,\tilx)\in\calX_i\times\calX,
\end{align}
then $\tilF_i(\cdot\vert\cdot)$ is a surrogate function of $f$ for $x_i$. \label{item:surr_trans}
\end{enumerate}
\end{lemma}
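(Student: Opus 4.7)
Both parts of Lemma~\ref{lem:cal_surr} will follow by directly verifying that the constructed function satisfies properties \ref{prop:zero_eq}, \ref{prop:zero_ge}, \ref{prop:mu} in Definition~\ref{def:first_order_surrogate}. There is no obstacle of substance; the plan is essentially a pointwise check. I organize it as follows.

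For part \ref{item:surr_sum}, I would set $F_i(\cdot\vert\cdot) \defeq \barF_i(\cdot\vert\cdot) + \tilF_i(\cdot\vert\cdot)$ and verify each property in turn. Property \ref{prop:zero_eq} is immediate from adding the two equalities $\barF_i(\tilx_i\vert\tilx)=\barf(\tilx)$ and $\tilF_i(\tilx_i\vert\tilx)=\tilf(\tilx)$. Property \ref{prop:zero_ge} follows by adding the two pointwise majorization inequalities. For \ref{prop:mu}, I would use that the sum of differentiable functions on $\calX_i\times\calX$ is differentiable, and then write $\nabla F_i(\cdot\vert\tilx) = \nabla\barF_i(\cdot\vert\tilx) + \nabla\tilF_i(\cdot\vert\tilx) = \barg(\cdot/\tilx_i\vert\tilx) + \tilg(\cdot/\tilx_i\vert\tilx)$, which is of the form $g(\cdot/\tilx_i\vert\tilx)$ with $g(\cdot\vert\tilx) \defeq \barg(\cdot\vert\tilx) + \tilg(\cdot\vert\tilx)$.

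For part \ref{item:surr_trans}, property \ref{prop:zero_eq} for $\tilF_i$ is given by hypothesis. For \ref{prop:zero_ge}, I would chain the two given inequalities: $\tilF_i(x_i\vert\tilx)\ge F_i(x_i\vert\tilx) \ge f(\tilx_1,\ldots,x_i,\ldots,\tilx_n)$, where the second step uses that $F_i$ is a surrogate of $f$. Property \ref{prop:mu} is assumed outright in the hypothesis on $\tilF_i$.

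The step I expect to require the most care is verifying \ref{prop:mu} in part \ref{item:surr_sum}: I must check that the ``multiplicative gradient'' structure $\nabla F_i(\cdot\vert\tilx) = g(\cdot/\tilx_i\vert\tilx)$ is preserved under sums, which is why I need to explicitly define $g \defeq \barg + \tilg$ and note that the argument $\cdot/\tilx_i$ is common to both summands (the entrywise division is with respect to the \emph{same} base point $\tilx_i$ for $\barF_i$ and $\tilF_i$, since both are surrogates of functions of the same argument $x_i$). Beyond this bookkeeping, the lemma is a routine definitional calculus, and the proof can be compressed to a few lines per part.
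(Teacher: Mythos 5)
Your proof is correct and matches the paper's treatment: the paper simply states that Lemma~\ref{lem:cal_surr} ``can be simply proved by definition,'' and your verification of \ref{prop:zero_eq}--\ref{prop:mu} for each part is exactly that routine definitional check, including the (minor but worth noting) observation that the multiplicative-gradient form in \ref{prop:mu} is closed under sums with the same base point $\tilx_i$.
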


\bibliographystyle{IEEEtran}
\bibliography{ORNMF_ref,RNMF_ref,bregman_ref,stoc_ref,math_opt,stat_ref,dataset}

% Generated by IEEEtran.bst, version: 1.14 (2015/08/26)
\begin{thebibliography}{10}
\providecommand{\url}[1]{#1}
\csname url@samestyle\endcsname
\providecommand{\newblock}{\relax}
\providecommand{\bibinfo}[2]{#2}
\providecommand{\BIBentrySTDinterwordspacing}{\spaceskip=0pt\relax}
\providecommand{\BIBentryALTinterwordstretchfactor}{4}
\providecommand{\BIBentryALTinterwordspacing}{\spaceskip=\fontdimen2\font plus
\BIBentryALTinterwordstretchfactor\fontdimen3\font minus
  \fontdimen4\font\relax}
\providecommand{\BIBforeignlanguage}[2]{{%
\expandafter\ifx\csname l@#1\endcsname\relax
\typeout{** WARNING: IEEEtran.bst: No hyphenation pattern has been}%
\typeout{** loaded for the language `#1'. Using the pattern for}%
\typeout{** the default language instead.}%
\else
\language=\csname l@#1\endcsname
\fi
#2}}
\providecommand{\BIBdecl}{\relax}
\BIBdecl

\bibitem{Zhao_17c}
R.~Zhao and V.~Y.~F. Tan, ``A unified convergence analysis of the
  multiplicative update algorithm for nonnegative matrix factorization,'' in
  \emph{Proc.\ ICASSP}, New Orleans, LA, USA, Mar. 2017.

\bibitem{Lee_99}
D.~D. Lee and H.~S. Seung, ``Learning the parts of objects by nonnegative
  matrix factorization,'' \emph{Nature}, vol. 401, pp. 788--791, October 1999.

\bibitem{Lee_00}
------, ``Algorithms for non-negative matrix factorization,'' in \emph{Proc.\
  NIPS}, Denver, USA, Dec. 2000, pp. 556--562.

\bibitem{Dhillon_06}
I.~S. Dhillon and S.~Sra, ``Generalized nonnegative matrix approximations with
  {B}regman divergences,'' in \emph{Proc.\ NIPS}, Dec. 2006.

\bibitem{Fev_11}
C.~F\'evotte and J.~Idier, ``Algorithms for nonnegative matrix factorization
  with the beta-divergence,'' \emph{Neural Comput.}, vol.~23, no.~9, pp.
  2421--2456, 2011.

\bibitem{Yang_11c}
Z.~Yang and E.~Oja, ``Unified development of multiplicative algorithms for
  linear and quadratic nonnegative matrix factorization,'' \emph{IEEE Trans.
  Neural Netw.}, vol.~22, no.~12, pp. 1878--1891, Dec. 2011.

\bibitem{Kim_08a}
J.~Kim and H.~Park, ``Toward faster nonnegative matrix factorization: A new
  algorithm and comparisons,'' in \emph{Proc.\ ICDM}, Pisa, Italy, Dec. 2008,
  pp. 353--362.

\bibitem{Lin_07a}
C.-J. Lin, ``Projected gradient methods for nonnegative matrix factorization,''
  \emph{Neural Comput.}, vol.~19, no.~10, pp. 2756--2779, Oct. 2007.

\bibitem{Xu_12}
Y.~Xu, W.~Yin, Z.~Wen, and Y.~Zhang, ``An alternating direction algorithm for
  matrix completion with nonnegative factors,'' \emph{Frontiers Math. China},
  vol.~7, pp. 365–--384, 2012.

\bibitem{Sun_14}
D.~Sun and C.~Fevotte, ``Alternating direction method of multipliers for
  non-negative matrix factorization with the beta-divergence,'' in \emph{Proc.\
  ICASSP}, Florence, Italy, May 2014, pp. 6201--6205.

\bibitem{Cichoc_08}
A.~Cichocki, H.-K. Lee, Y.-D. Kim, and S.~Choi, ``Nonnegative matrix
  factorization with alpha-divergence,'' \emph{Pattern Recognit. Lett.},
  vol.~29, no.~9, pp. 1433--1440, 2008.

\bibitem{Cichoc_10}
A.~Cichocki and S.~Amari, ``Families of {A}lpha-{B}eta-and {G}amma-divergences:
  Flexible and robust measures of similarities,'' \emph{Entropy}, vol.~12,
  no.~6, pp. 1532--1568, 2010.

\bibitem{Fev_09}
C.~F\'evotte, N.~Bertin, and J.-L. Durrieu, ``Nonnegative matrix factorization
  with the {I}takura-{S}aito divergence. {W}ith application to music
  analysis,'' \emph{Neural Comput.}, vol.~21, no.~3, pp. 793--830, Mar. 2009.

\bibitem{Vavasis_09}
S.~A. Vavasis, ``On the complexity of nonnegative matrix factorization,''
  \emph{SIAM J. Optim.}, vol.~20, no.~3, pp. 1364--–1377, 2009.

\bibitem{Donoho_04}
D.~Donoho and V.~Stodden, ``When does non-negative matrix factorization give
  correct decomposition into parts?'' in \emph{Proc.\ NIPS}, Vancouver, Canada,
  Dec. 2004, pp. 1141--1148.

\bibitem{Bitt_12}
V.~Bittorf, B.~Recht, C.~R{\'e}, and J.~A. Tropp, ``Factoring nonnegative
  matrices with linear programs,'' in \emph{Proc. NIPS}, 2012, pp. 1214--1222.

\bibitem{Arora_12}
S.~Arora, R.~Ge, R.~Kannan, and A.~Moitra, ``Computing a nonnegative matrix
  factorization -- provably,'' in \emph{Proc.\ STOC}, New York, New York, USA,
  May 2012, pp. 145--162.

\bibitem{Gillis_14}
N.~Gillis and R.~Luce, ``Robust near-separable nonnegative matrix factorization
  using linear optimization,'' \emph{J. Mach. Learn. Res.}, vol.~15, no.~1, pp.
  1249--1280, 2014.

\bibitem{Huang_14b}
K.~Huang, N.~D. Sidiropoulos, and A.~Swami, ``Non-negative matrix factorization
  revisited: Uniqueness and algorithm for symmetric decomposition,'' \emph{IEEE
  Trans. Signal Process.}, vol.~62, no.~1, pp. 211--224, Jan. 2014.

\bibitem{Lin_07c}
C.-J. Lin, ``On the convergence of multiplicative update algorithms for
  nonnegative matrix factorization,'' \emph{IEEE Trans. Neural Netw.}, vol.~18,
  no.~6, pp. 1589--1596, 2007.

\bibitem{Gillis_08}
N.~Gillis and F.~Glineur, ``Nonnegative factorization and the maximum edge
  biclique problem,'' arXiv:0810.4225, 2008.

\bibitem{Taka_14}
N.~Takahashi and R.~Hibi, ``Global convergence of modified multiplicative
  updates for nonnegative matrix factorization,'' \emph{Comput. Optim. Appl.},
  vol.~57, no.~2, pp. 417--440, 2014.

\bibitem{Taka_14b}
N.~Takahashi, J.~Katayama, and J.~Takeuchi, ``A generalized sufficient
  condition for global convergence of modified multiplicative updates for
  nmf,'' in \emph{Int. Symp. Nonlinear Theory Appl.}, Luzern, Switzerland,
  2014, pp. 44--47.

\bibitem{Kim_07}
H.~Kim and H.~Park, ``Sparse non-negative matrix factorizations via alternating
  non-negativity-constrained least squares for microarray data analysis,''
  \emph{Bioinform.}, pp. 1495--1502, 2007.

\bibitem{Haji_16}
D.~Hajinezhad, T.~H. Chang, X.~Wang, Q.~Shi, and M.~Hong, ``Nonnegative matrix
  factorization using admm: Algorithm and convergence analysis,'' in
  \emph{Proc.\ ICASSP}, Shanghai, China, Mar. 2016, pp. 4742--4746.

\bibitem{Fev_09b}
C.~F\'evotte and A.~T. Cemgil, ``Nonnegative matrix factorizations as
  probabilistic inference in composite models,'' in \emph{Proc.\ EUSIPCO},
  Glasgow, UK, Aug. 2009, pp. 1913--1917.

\bibitem{Paa_94}
P.~Paatero and U.~Tapper, ``Positive matrix factorization: A non-negative
  factor model with optimal utilization of error estimates of data values,''
  \emph{Environmetrics}, vol.~5, no.~2, pp. 111--126, 1994.

\bibitem{Badeau_10}
R.~Badeau, N.~Bertin, and E.~Vincent, ``Stability analysis of multiplicative
  update algorithms and application to nonnegative matrix factorization,''
  \emph{IEEE Trans. Neural Netw.}, vol.~21, no.~12, pp. 1869--1881, Dec. 2010.

\bibitem{Cichoc_11}
A.~Cichocki, S.~Cruces, and S.-i. Amari, ``Generalized alpha-beta divergences
  and their application to robust nonnegative matrix factorization,''
  \emph{Entropy}, vol.~13, no.~1, pp. 134--170, 2011.

\bibitem{Hoyer_02}
P.~O. Hoyer, ``Non-negative sparse coding,'' in \emph{Proc.\ NNSP}, Valais,
  Switzerland, Sep. 2002, pp. 557--565.

\bibitem{Cai_11}
D.~Cai, X.~He, J.~Han, and T.~S. Huang, ``Graph regularized nonnegative matrix
  factorization for data representation,'' \emph{IEEE Trans. Pattern Anal.
  Mach. Intell.}, vol.~33, no.~8, pp. 1548--1560, 2011.

\bibitem{Tasla_12}
L.~Taslaman and B.~Nilsson, ``A framework for regularized non-negative matrix
  factorization, with application to the analysis of gene expression data,''
  \emph{PloS ONE}, vol.~7, no.~11, pp. 1--7, 2012.

\bibitem{Mirzal_14}
A.~Mirzal, ``Nonparametric tikhonov regularized nmf and its application in
  cancer clustering,'' \emph{IEEE/ACM Trans. Comput. Biol. Bioinform.},
  vol.~11, no.~6, pp. 1208--1217, 2014.

\bibitem{Raza_13}
M.~Razaviyayn, M.~Hong, and Z.-Q. Luo, ``A unified convergence analysis of
  block successive minimization methods for nonsmooth optimization,''
  \emph{SIAM J. Optim.}, vol.~23, no.~2, 2013.

\bibitem{Hong_16}
M.~Hong, M.~Razaviyayn, Z.~Q. Luo, and J.~S. Pang, ``A unified algorithmic
  framework for block-structured optimization involving big data: With
  applications in machine learning and signal processing,'' \emph{IEEE Signal
  Process. Mag.}, vol.~33, no.~1, pp. 57--77, Jan. 2016.

\bibitem{Guill_01}
D.~Guillamet, M.~Bressan, and J.~Vitria, ``A weighted non-negative matrix
  factorization for local representations,'' in \emph{Proc. CVPR}, vol.~1,
  2001, pp. 942--947.

\bibitem{Guill_03}
D.~Guillamet, J.~Vitri\`a, and B.~Schiele, ``Introducing a weighted
  non-negative matrix factorization for image classification,'' \emph{Pattern
  Recognit. Lett.}, vol.~24, no.~14, pp. 2447--2454, Oct. 2003.

\bibitem{Ho_08}
N.-D. Ho, ``Nonnegative matrix factorization: Algorithms and applications.''

\bibitem{Gu_10}
Q.~Gu, J.~Zhou, and C.~H.~Q. Ding, ``Collaborative filtering: Weighted
  nonnegative matrix factorization incorporating user and item graphs.'' in
  \emph{Proc.\ SDM}, 2010, pp. 199--210.

\bibitem{Zheng_15}
X.~Zheng, S.~Zhu, J.~Gao, and H.~Mamitsuka, ``Instance-wise weighted
  nonnegative matrix factorization for aggregating partitions with locally
  reliable clusters,'' in \emph{Proc.\ IJCAI}, Buenos Aires, Argentina, 2015,
  pp. 4091--4097.

\bibitem{Zou_05}
H.~Zou and T.~Hastie, ``Regularization and variable selection via the elastic
  net,'' \emph{J. Roy. Statist. Soc. Ser. B}, vol.~67, no.~2, pp. 301--320,
  2005.

\bibitem{Pauca_06}
V.~P. Pauca, J.~Piper, and R.~J. Plemmons, ``Nonnegative matrix factorization
  for spectral data analysis,'' \emph{Linear Algebra Appl.}, vol. 416, no.~1,
  pp. 29 -- 47, 2006.

\bibitem{Mairal_15}
J.~Mairal, ``Incremental majorization-minimization optimization with
  application to large-scale machine learning,'' \emph{SIAM J. Optim.},
  vol.~25, no.~2, pp. 829–--855, 2015.

\bibitem{Bert_99}
D.~P. Bertsekas, \emph{Nonlinear Programming}.\hskip 1em plus 0.5em minus
  0.4em\relax Athena Scitific, 1999.

\bibitem{Burden_16}
R.~L. Burden and J.~D. Faires, \emph{Numerical Analysis}.\hskip 1em plus 0.5em
  minus 0.4em\relax Brooks/Cole Publishing Company, 2016.

\bibitem{Nest_06}
Y.~Nesterov and B.~Polyak, ``Cubic regularization of newton method and its
  global performance,'' \emph{Mathematical Programming}, vol. 108, no.~1, pp.
  177--205, 2006.

\bibitem{Sun_16}
R.~Sun and Z.-Q. Luo, ``Guaranteed matrix completion via non-convex
  factorization,'' \emph{IEEE Trans. Inf. Theory}, vol.~62, no.~11, Nov. 2016.

\bibitem{Ge_16}
R.~Ge, J.~D. Lee, and T.~Ma, ``Matrix completion has no spurious local
  minimum,'' in \emph{Proc. NIPS}, 2016, pp. 2973--2981.

\bibitem{Tan_13a}
V.~Y.~F. Tan and C.~F\'evotte, ``Automatic relevance determination in
  nonnegative matrix factorization with the $\beta$-divergence,'' \emph{IEEE
  Trans. Pattern Anal. Mach. Intell.}, vol.~35, no.~7, pp. 1592--1605, Jul.
  2013.

\end{thebibliography}

\end{document}